\newcommand{\RH}{\mathbb{R}H^{n}(-k^2)}
\newcommand{\CH}{\mathbb{C}H^n(-4k^2)}
\newcommand{\QH}{\mathbb{H}H^{n}(-4k^2)}
\newcommand{\OH}{\mathbb{O}H^{2}(-4k^2)}
\newcommand{\KH}{\mathbb{K}H^{n}(-4k^2)}
\newcommand{\K}{\mathbb{K}}
\newcommand{\oc}{\mathbb{O}}
\newcommand{\Q}{\mathbb{H}}
\newcommand{\C}{\mathbb{C}}
\newcommand{\R}{\mathbb{R}}
\newcommand{\CP}{\mathbb{C}P^{n}(4k^2)}
\newcommand{\h}{\mathbb{H}}
\newcommand{\vol}{\mathop{\rm vol}}
\newcommand{\arctanh}{\mathrm{arctanh}}
\theoremstyle{definition}
\newtheorem{definition}{Definition}[section]
\newtheorem{example}[definition]{Example}
\theoremstyle{plain}
\newtheorem{theorem}[definition]{Theorem}
\newtheorem{proposition}[definition]{Proposition}
\newtheorem{corollary}[definition]{Corollary}
\newtheorem{lemma}[definition]{Lemma}
\newtheorem*{theorem*}{Theorem}
\newtheorem*{theoremA}{Theorem A}
\newtheorem*{theoremB}{Theorem B}
\newtheorem*{theoremN}{Theorem}
\theoremstyle{remark}
\newtheorem{remark}[definition]{Remark}
\title{Convexity on Complex Hyperbolic Space}
\author{Judit Abardia} 
\author{Eduardo Gallego} 
\address{Departament de Matem{\`a}tiques, Facultat de Ci{\`e}ncies,
Universitat Aut{\`o}noma de Barcelona, 08193--Bellaterra (Barcelona), Spain}
\email{juditab@mat.uab.cat, egallego@mat.uab.cat}
\thanks{Work partially supported by DGICYT grant \#MTM2009-07594 and by the {\em Departament d'Innovaci\'o Universitats i Recerca} from the Generalitat of Catalonia and the ESF}
\keywords{Complex hyperbolic space, Convex set, Volume, Area}
\subjclass{Primary 52A20; Secondary 52A55}
\begin{document}
\maketitle

\begin{abstract}
In a Riemannian manifold a regular convex domain is said to be $\lambda$-convex if its normal curvature at each point is greater than or equal to $\lambda$. In a Hadamard manifold, the asymptotic behaviour of the quotient $\vol(\Omega(t))/\vol(\partial\Omega(t))$ for a family of $\lambda$-convex domains $\Omega(t)$ expanding over the whole space has been studied and general bounds for this quotient are known.
In this paper we improve this general result in the complex hyperbolic space $\CH$, a Hadamard manifold with constant holomorphic curvature equal to $-4k^2$. Furthermore, we give some specific properties of convex domains in $\CH$ and we prove that  $\lambda$-convex domains of arbitrary radius exists if $\lambda\leq k$.  
\end{abstract}

\section{Introduction}
Given a family of convex domains expanding
over the whole Euclidean plane, the quotient between the area and the perimeter tends to infinity. This behaviour does not hold in hyperbolic
plane $\mathbb{H}^2(-1)$ where the quotient tends to a value less or equal
than 1.

A convex domain in $\h^2(-1)$ is said to be $h$-convex if for every pair of points in the convex domain, each horocyclic segment joining them also belongs to the convex domain. The first result about the asymptotic behavior of the quotient $\text{area}/\text{perimeter}$ of convex domains in $\mathbb{H}^2(-1)$ was stated for a family of $h$-convex domains by Santal\'{o} and Ya\~{n}ez \cite{santalo.yanez72} in 1972. 
They proved that if $\{\Omega(t)\}_{t\in\R^{+}\!}$ is a family of compact $h$-convex domains
in $\h^2(-1)$ expanding over the whole plane, then
\begin{equation}\label{areaLength}\lim_{t\rightarrow\infty}\frac{\textrm{area}(\Omega(t))}{\textrm{length}(\partial\Omega(t))}=1.\end{equation}
In hyperbolic plane, the set of equidistant points to a geodesic line are two curves called equidistant lines. These curves have constant geodesic curvature $\lambda$ such that $0<\lambda<1$. It is said that a convex domain is $\lambda$-convex if for every pair of points in the convex domain, each segment of curve with constant geodesic curvature $\lambda$ joining them also belongs to the convex domain. In \cite{gallego.reventos99} it is proved that a convex domain in $\h^2(-1)$ is $\lambda$-convex if and
only if the geodesic curvature of the boundary is greater than or equal to
$\lambda$.  In \cite{gallego.reventos99} it is also proved that the quotient in (\ref{areaLength}) can take any value between $[\lambda,1]$ for a family of $\lambda$-convex sets. Roughly speaking $\lambda$-convex sets ($\lambda>0$) have the boundary more curved than ($0\,$-)convex sets. The notion of $\lambda$-convex domain is also defined for arbitrary Riemannian manifolds (see Definition \ref{convexDomain}).

Recall that a Hadamard manifold is a simply connected Riemmanian manifold with non-positive sectional curvature. For Hadamard manifolds it was stated in \cite{borisenko.gallego.reventos} the following result concerning $\lambda$-convexity.

\begin{theoremN}[\cite{borisenko.gallego.reventos}]
Let $M$ be a Hadamard manifold with sectional curvature $K$ such that $-k^2_{2}\leq K\leq -k_{1}^2$. It can only exist families of $\lambda$-convex domains expanding over the whole space $M$ if $\lambda\leq k_{2}$.
\end{theoremN}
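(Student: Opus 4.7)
The plan is to derive the bound $\lambda\le k_2$ as a purely local estimate, by comparing at a contact point of an inscribed ball the second fundamental form of $\partial\Omega$ with that of the ball's boundary sphere. The only curvature hypothesis needed will be the lower bound $K\ge -k_2^2$; the upper bound plays no role.

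First I would observe that if $\{\Omega(t)\}$ expands over the whole Hadamard manifold $M$, then for any fixed basepoint $p_0$ and any $R>0$ one has $B(p_0,R)\subseteq\Omega(t)$ for all sufficiently large $t$, so the inradius $R(t)$ of $\Omega(t)$ tends to infinity. For each $t$ I pick an inscribed ball $B(q_t,R(t))\subseteq\Omega(t)$ of maximal radius and a contact point $p_t\in\partial B(q_t,R(t))\cap\partial\Omega(t)$; by maximality the two hypersurfaces are tangent at $p_t$ and share the same outward unit normal.

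Next, setting $r(\cdot)=d(q_t,\cdot)$, the Hessian comparison theorem applied with the lower bound $K\ge -k_2^2$ yields
\[
\operatorname{Hess} r(X,X)\ \le\ k_2\coth(k_2 r)\,|X|^2 \qquad (X\perp \nabla r),
\]
so every principal curvature of the geodesic sphere $\partial B(q_t,R(t))$ at $p_t$ is at most $k_2\coth(k_2 R(t))$. A short local graph argument at $p_t$ (both hypersurfaces are graphs over their common tangent hyperplane, and the containment $B(q_t,R(t))\subseteq\Omega(t)$ forces the graph of $\partial B$ to lie on the interior side of that of $\partial\Omega$) then gives the osculation inequality
\[
\mathrm{II}_{\partial\Omega(t)}(p_t)\ \le\ \mathrm{II}_{\partial B(q_t,R(t))}(p_t)
\]
as symmetric forms on the common tangent space. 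Combined with $\lambda$-convexity this yields
\[
\lambda\ \le\ k_2\coth(k_2 R(t)),
\]
and letting $t\to\infty$ produces $\lambda\le k_2$.

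I expect the main subtle point to be the osculation step in the previous paragraph: one must track sign conventions carefully to verify that tangential contact together with $B(q_t,R(t))\subseteq\Omega(t)$ really forces the inequality of the second fundamental forms in the stated direction. The Hessian comparison estimate itself is a standard Rauch/Greene--Wu type result already used in the references cited here, and the passage to the limit is immediate from $\coth(k_2 R)\to 1$ as $R\to\infty$.
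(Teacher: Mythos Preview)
Your argument is correct and is precisely the inscribed-sphere comparison that underlies this cited result; the paper does not re-prove the statement itself, but its proof of Theorem~\ref{theorem1} follows exactly the same template (inscribed sphere at a contact point, bound the normal curvature of the sphere, let the inradius go to infinity), and the Remark immediately afterwards makes the general Hadamard version explicit via the inequality $K_n\le k_2\coth(k_2 r)$. So your approach coincides with the paper's.
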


Complex hyperbolic space $\CH$ is a Hadamard manifold of real dimension $2n$ with real sectional curvature $K$ such that $-4k^2\leq K\leq -k^2$. 

\smallskip
In this paper we use properties of normal curvature of spheres in complex hyperbolic space (see Corollary \ref{esferes}) to prove that the estimate of the preceding theorem now becomes $\lambda\leq k$. Note that we restrict it from $k_{2}$ to $k_{1}$.

\begin{theorem}\label{theorem1}In complex hyperbolic space $\CH$, it can only exist families of compact $\lambda$-convex domains expanding over
the whole $\CH$ if $\lambda\leq k$.\end{theorem}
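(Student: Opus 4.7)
The plan is to compare, at a well-chosen point, the second fundamental form of $\partial\Omega(t)$ with that of a geodesic sphere inscribed in $\Omega(t)$, and then to let the radius of this inscribed sphere tend to infinity. Fix a base point $p\in\CH$. Since the family $\{\Omega(t)\}$ exhausts $\CH$, for $t$ large enough we have $p\in\Omega(t)$, and the distance $R(t):=\mathrm{dist}(p,\partial\Omega(t))$ tends to $+\infty$. Choose $q_t\in\partial\Omega(t)$ realising this distance. Then the closed geodesic ball $\overline{B(p,R(t))}$ is contained in $\Omega(t)$, and $\partial B(p,R(t))$ is internally tangent to $\partial\Omega(t)$ at $q_t$ with a common outward unit normal (the normal is the tangent at $q_t$ to the unique minimising geodesic from $p$ to $q_t$, which exists because $\CH$ is a Hadamard manifold).

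At this tangency point I would then apply the standard local comparison of convex hypersurfaces: if two smooth hypersurfaces are tangent at a point with a common outward normal and one bounds a region that contains the region bounded by the other, then its shape operator is bounded above by that of the inner one. Applied here, this yields
\[
A^{\partial\Omega(t)}_{q_t}\;\leq\;A^{\partial B(p,R(t))}_{q_t}
\]
as self-adjoint operators on $T_{q_t}\partial\Omega(t)$. The $\lambda$-convexity of $\Omega(t)$ says that $A^{\partial\Omega(t)}_{q_t}\geq\lambda\,\mathrm{Id}$, and hence every principal curvature of the geodesic sphere $\partial B(p,R(t))$ at $q_t$, in particular the smallest one, is at least $\lambda$.

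To conclude, I invoke Corollary \ref{esferes}: the principal curvatures of a geodesic sphere of radius $R$ in $\CH$ are $2k\coth(2kR)$ along the $J$-image of the radial direction (with multiplicity one, the complex tangent direction to the sphere) and $k\coth(kR)$ in the remaining real directions orthogonal to this complex line (with multiplicity $2n-2$). The minimum is therefore $k\coth(kR)$, which gives $\lambda\leq k\coth(kR(t))$. Letting $t\to\infty$ so that $R(t)\to\infty$ yields $\lambda\leq k$.

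The argument is genuinely short once Corollary \ref{esferes} is at hand, and the improvement over the general Hadamard bound of \cite{borisenko.gallego.reventos} (which, based only on the pinching $K\geq -4k^2$, gives no better than $\lambda\leq 2k$) sits entirely in the sharp computation of the minimal normal curvature of large geodesic spheres in $\CH$. The only substantive point to pin down is therefore that the smallest principal curvature of such a sphere is $k\coth(kR)$, attained in a real rather than a complex tangent direction; this is exactly the content packed into Corollary \ref{esferes}, so the remaining work is essentially its exploitation.
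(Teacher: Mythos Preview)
Your argument is correct and follows essentially the same route as the paper: inscribe a geodesic sphere tangent to $\partial\Omega(t)$, compare curvatures at the tangency point, use that the smallest principal curvature of the sphere is $k\coth(kR)$, and let $R\to\infty$. The only difference is that you invoke the general shape-operator comparison at a tangency (valid in any Riemannian manifold via graph coordinates), whereas the paper restricts to principal directions of the sphere and proves the comparison by slicing with the totally geodesic $\mathbb{H}^2(-k^2)$ or $\mathbb{H}^2(-4k^2)$ spanned by $N$ and that direction (Lemma~\ref{normal} and Corollary~\ref{totgeod}); your shortcut is legitimate and avoids that detour.
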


The result about the asymptotic behaviour of $\vol(\Omega(t))/\vol(\partial\Omega(t))$ was studied for families of $\lambda$-convex domains $\{\Omega(t)\}$ expanding over the whole $\h^n$ in \cite{borisenko.miquel99} and \cite{borisenko.vlasenko}. In \cite{borisenko.gallego.reventos} it was stated for Hadamard manifolds as follows: 

\begin{theoremN}[\cite{borisenko.gallego.reventos}]
Let $\{\Omega(t)\}_{t\in\R^+}$ be a piecewise $\mathcal{C}^2$ family of $\lambda$-convex compact domains, $0\leq\lambda\leq k_2$, in a Hadamard manifold $M$ of dimension $n$ with sectional curvature $K$ such that $-k_{2}^2\leq K\leq -k_{1}^2$ and suppose that it expands over the whole $M$. Then
$$\frac{\lambda}{(n-1)k_2^2}\leq
\liminf_{t\rightarrow\infty}\frac{\vol(\Omega(t))}{\vol(\partial\Omega(t))}\leq
\limsup_{t\rightarrow\infty}\frac{\vol(\Omega(t))}{\vol(\partial\Omega(t))}\leq
\frac{1}{(n-1)k_1}.$$
\end{theoremN}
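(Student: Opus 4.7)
The plan is to prove the two inequalities by independent comparison arguments: the upper bound by Hessian comparison against the model $\mathbb{H}^{n}(-k_{1}^{2})$ and the divergence theorem, and the lower bound by a Gauss-Bonnet / Riccati-style integral identity adapted to the model $\mathbb{H}^{n}(-k_{2}^{2})$. The upper bound will hold uniformly in $t$; only the lower bound genuinely uses the expansion hypothesis, which is why it is stated as a liminf.

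\textbf{Upper bound.} Fix a base point $o\in M$ and set $r_{o}(p)=d(o,p)$. The Hessian comparison theorem applied with $K\leq-k_{1}^{2}$ gives $\Delta r_{o}\geq(n-1)\,k_{1}\coth(k_{1}r_{o})\geq(n-1)\,k_{1}$ on $M\setminus\{o\}$. Since $|\nabla r_{o}|=1$, the divergence theorem on $\Omega(t)$ yields
\[
(n-1)\,k_{1}\,\vol(\Omega(t))\;\leq\;\int_{\Omega(t)}\Delta r_{o}\,dV\;=\;\int_{\partial\Omega(t)}\langle\nabla r_{o},\nu\rangle\,d\sigma\;\leq\;\vol(\partial\Omega(t)),
\]
so $\vol(\Omega(t))/\vol(\partial\Omega(t))\leq 1/((n-1)k_{1})$ uniformly in $t$, proving the $\limsup$ estimate.

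\textbf{Lower bound in dimension $2$.} For $n=2$ the Gauss-Bonnet theorem on a $\lambda$-convex simply connected compact $\Omega\subset M$ reads $\int_{\partial\Omega}\kappa_{g}\,ds+\int_{\Omega}K\,dA=2\pi$. Using $\kappa_{g}\geq\lambda$ and $K\geq-k_{2}^{2}$ gives
\[
\lambda\,\vol(\partial\Omega)\;\leq\;\int_{\partial\Omega}\kappa_{g}\,ds\;=\;2\pi-\int_{\Omega}K\,dA\;\leq\;2\pi+k_{2}^{2}\vol(\Omega),
\]
so $\vol(\Omega)/\vol(\partial\Omega)\geq\lambda/k_{2}^{2}-2\pi/(k_{2}^{2}\vol(\partial\Omega))$. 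The upper bound above forces $\vol(\partial\Omega(t))\to\infty$ as $t\to\infty$, and the liminf bound $\lambda/k_{2}^{2}=\lambda/((n-1)k_{2}^{2})$ follows.

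\textbf{Lower bound in general dimension.} For $n\geq 3$ the aim is to reproduce the scheme without Gauss-Bonnet. I would apply the divergence theorem to a vector field of the form $X=f(r_{\partial\Omega})\nabla r_{\partial\Omega}$, where $r_{\partial\Omega}(p)=d(p,\partial\Omega(t))$ and the radial profile $f$ is calibrated to the model Riccati ODE $\tilde\sigma'=\tilde\sigma^{2}-k_{2}^{2}$, $\tilde\sigma(0)=\lambda$. The boundary term is a fixed multiple of $\vol(\partial\Omega(t))$; the bulk term, which is a combination of $f'(r_{\partial\Omega})$ and $f(r_{\partial\Omega})\Delta r_{\partial\Omega}$, is controlled by a Riccati/Jacobi comparison using $K\geq-k_{2}^{2}$ along each inward normal geodesic from $\partial\Omega(t)$. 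The outcome is an inequality of the shape
\[
\vol(\partial\Omega(t))\;\geq\;\frac{(n-1)k_{2}^{2}}{\lambda}\,\vol(\Omega(t))\;-\;\varepsilon(t),
\]
with an error $\varepsilon(t)$ supported in a fixed-thickness layer near $\partial\Omega(t)$ and hence negligible compared with $\vol(\partial\Omega(t))$ as $t\to\infty$.

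\textbf{Main obstacle.} The principal difficulty in dimension $n\geq 3$ is the lack of an exact Gauss-Bonnet bridge between boundary and interior curvature: the naive Riccati comparison from $K\geq-k_{2}^{2}$ controls the principal curvatures of inner parallels from below, hence the inward Jacobian of the normal exponential map from \emph{above}, which is the wrong direction for a direct estimate of $\vol(\Omega)/\vol(\partial\Omega)$. The remedy is to choose the profile $f$ so that the offending terms in the divergence identity combine with the correct sign, and to use the expansion hypothesis to discard the boundary-layer error. The sharp constant $\lambda/((n-1)k_{2}^{2})$ is realised in the limit by equidistant slabs in $\mathbb{H}^{n}(-k_{2}^{2})$ of thickness $\arctanh(\lambda/k_{2})/k_{2}$, which play the role of extremal configurations.
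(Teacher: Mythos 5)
This theorem is quoted in the paper from \cite{borisenko.gallego.reventos} without proof; the machinery the paper actually deploys for the analogous Theorem \ref{theorem2} is the polar--coordinate representation (\ref{volum})--(\ref{area}) together with the angle estimate of Proposition \ref{teoremaBGR}. Measured against that, your upper bound is correct and complete: the Laplacian comparison $\Delta r_o\geq(n-1)k_1\coth(k_1r_o)\geq(n-1)k_1$ plus the divergence theorem is essentially the integrated form of the Jacobian estimate used in the paper, and the singularity at $o$ and the piecewise $\mathcal{C}^2$ boundary are handled by standard excision. Your $n=2$ lower bound via Gauss--Bonnet is also correct and genuinely more elementary, but it does not extend to higher dimensions.

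The gap is the lower bound for $n\geq3$, which is the substance of the statement. Your paragraph ``Lower bound in general dimension'' is a plan, not a proof: you propose $X=f(\rho)\nabla\rho$ with $\rho=d(\cdot,\partial\Omega(t))$, you correctly identify that the Riccati comparison coming from $K\geq-k_2^2$ controls the inward Jacobian in the wrong direction, and you then assert that a suitable profile $f$ makes ``the offending terms combine with the correct sign'' without exhibiting such an $f$ or verifying the signs; moreover $\rho$ fails to be $\mathcal{C}^2$ past the cut locus of $\partial\Omega(t)$, so the divergence identity itself needs justification on the region where your error term lives. The ingredient that actually closes the argument --- both in \cite{borisenko.gallego.reventos} and in the paper's proof of Theorem \ref{theorem2} --- is the angle estimate $\cos\varphi\geq\lambda/k_2$ of Proposition \ref{teoremaBGR}, valid once $d(O,\partial\Omega)\geq(1/k_2)\arctanh(\lambda/k_2)$. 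With it one stays with the outward radial field from a fixed interior point: writing $A(t,u)$ for the area density in polar coordinates, the comparison $\partial_t\log A(t,u)\leq(n-1)k_2\coth(k_2t)$ from $K\geq-k_2^2$ gives $\int_0^{l(u)}A(t,u)\,dt\geq\bigl(\tfrac{1}{(n-1)k_2}-\epsilon\bigr)A(l(u),u)$ for large $l(u)$, while the angle estimate gives $\vol(\partial\Omega)\leq(k_2/\lambda)\int_{S^{n-1}}A(l(u),u)\,dS$; dividing yields the constant $\lambda/((n-1)k_2^2)$ in the limit. Without that estimate, or a proved substitute for it, your scheme does not yet produce the stated lower bound.
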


When we apply this result to $\h^n$ we obtain the same bounds as in \cite{borisenko.vlasenko}. Moreover, these bounds are sharp (see \cite{solanes.tesi}).
The same theorem applied to complex hyperbolic space gives the following result.

\begin{corollary}Let $\{\Omega(t)\}_{t\in\R^+}$ be a piecewise $\mathcal{C}^2$ family of $\lambda$-convex compact domains, $0\leq\lambda\leq 2k$, in $\CH$ with sectional curvature $K$ such that $-4k^{2}\leq K\leq -k^2$ and suppose that it expands over the whole $\CH$. Then
$$\frac{\lambda}{4(2n-1)k^2}\leq
\liminf_{t\rightarrow\infty}\frac{\vol(\Omega(t))}{\vol(\partial\Omega(t))}\leq
\limsup_{t\rightarrow\infty}\frac{\vol(\Omega(t))}{\vol(\partial\Omega(t))}\leq
\frac{1}{(2n-1)k}.$$\end{corollary}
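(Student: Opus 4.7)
The plan is to deduce this corollary by a direct application of the preceding theorem of Borisenko--Gallego--Revent\'os, so the only substantive work is bookkeeping of dimensions and curvature bounds. The general theorem is stated for a Hadamard manifold of real dimension $n$ with sectional curvature pinched as $-k_2^2 \leq K \leq -k_1^2$. The complex hyperbolic space $\CH$ is a Hadamard manifold whose \emph{real} dimension is $2n$ (not $n$), and its real sectional curvature satisfies $-4k^2 \leq K \leq -k^2$, a standard fact invoked in the introduction. I would therefore identify the parameters of the general theorem as $n_{\text{gen}} = 2n$, $k_1 = k$, and $k_2 = 2k$.

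With these identifications the admissibility range $0 \leq \lambda \leq k_2$ translates into $0 \leq \lambda \leq 2k$, exactly the hypothesis of the corollary. Substituting into the lower and upper bounds of the general theorem gives
\[
\frac{\lambda}{(n_{\text{gen}} - 1)k_2^2} = \frac{\lambda}{(2n-1)(2k)^2} = \frac{\lambda}{4(2n-1)k^2}
\]
and
\[
\frac{1}{(n_{\text{gen}} - 1)k_1} = \frac{1}{(2n-1)k},
\]
which are precisely the two outer terms in the claimed chain of inequalities. Since the family $\{\Omega(t)\}$ is piecewise $\mathcal{C}^2$ and expands over the whole space by hypothesis, all assumptions of the general theorem are satisfied, and the conclusion follows.

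There is no real obstacle here; the only point requiring any care is to remember that the ``$n$'' in the general statement means real dimension, so it must be replaced by $2n$ for $\CH$, and that the extremal sectional curvatures $-4k^2$ and $-k^2$ produce $k_2 = 2k$ and $k_1 = k$, with the square in the denominator of the lower bound then yielding the factor $4$.
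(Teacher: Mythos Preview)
Your proposal is correct and matches the paper's own treatment exactly: the paper states this corollary immediately after the Borisenko--Gallego--Revent\'os theorem with the single sentence ``The same theorem applied to complex hyperbolic space gives the following result,'' so the entire content is precisely the parameter substitution $n_{\text{gen}}=2n$, $k_1=k$, $k_2=2k$ that you carry out.
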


The second goal of this paper is to show that inequalities in this corollary can be improved. Using properties of the differential geometry of complex hyperbolic space we obtain new bounds for the asymptotic behaviour of the quotient and we prove that the new estimate for the upper bound is sharp; it is attained, among others, for a family of geodesic spheres. 

\begin{theorem}\label{theorem2}Let $\{\Omega(t)\}_{t\in\R^+}$ be a piecewise $\mathcal{C}^2$ family of $\lambda$-convex compact domains, $0\leq \lambda\leq k$, expanding over the whole space $\CH$, $n\geq 2$. Then,
\begin{equation}\label{fitesTeorema}\frac{\lambda}{4nk^2}\leq \liminf_{t\rightarrow\infty}\frac{\emph{vol}(\Omega(t))}{\emph{vol}(\partial\Omega(t))}\leq \limsup_{t\rightarrow\infty}\frac{\emph{vol}(\Omega(t))}{\emph{vol}(\partial\Omega(t))}\leq \frac{1}{2nk}.\end{equation} Moreover, the upper bound is sharp.\end{theorem}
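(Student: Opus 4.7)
My plan is to combine (a) a direct polar-coordinate computation for sharpness, (b) a divergence-theorem argument for the upper bound using the $\CH$-specific formula for $\Delta r$ obtained from Corollary~\ref{esferes}, and (c) a more delicate integral-geometric / Riccati argument for the lower bound.

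\emph{Sharpness.} In geodesic polar coordinates about a point $o\in\CH$, the Jacobi fields along the direction $JN$ grow like $\sinh(2kr)/(2k)$ and those perpendicular to $\{N,JN\}$ like $\sinh(kr)/k$, giving
\[ \vol(\partial B_r)=\frac{\omega_{2n-1}}{k^{2n-1}}\sinh^{2n-1}(kr)\cosh(kr),\qquad \vol(B_r)=\frac{\omega_{2n-1}}{2nk^{2n}}\sinh^{2n}(kr), \]
so $\vol(B_r)/\vol(\partial B_r)=\tanh(kr)/(2nk)\to 1/(2nk)$, proving sharpness.

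\emph{Upper bound.} Since the mean curvature of a geodesic sphere equals the Laplacian of the distance function, Corollary~\ref{esferes} gives
\[ \Delta r = 2k\coth(2kr)+(2n-2)k\coth(kr), \]
which satisfies the pointwise inequality $\Delta r>2nk$ on $\CH\setminus\{o\}$, because $\coth x>1$ for $x>0$. Choose $t$ large enough that $o\in\Omega(t)$ --- this holds for all sufficiently large $t$ since the family expands to fill $\CH$ --- and apply the divergence theorem to $\nabla r$ on $\Omega(t)$:
\[ 2nk\,\vol(\Omega(t)) < \int_{\Omega(t)}\Delta r\,dV = \int_{\partial\Omega(t)}\langle\nabla r,N\rangle\,dA \leq \vol(\partial\Omega(t)). \]
Hence $\vol(\Omega(t))/\vol(\partial\Omega(t))<1/(2nk)$ for every such $t$, giving $\limsup\leq 1/(2nk)$. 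Notice this argument uses only the piecewise-$C^2$ regularity of $\partial\Omega(t)$, not $\lambda$-convexity, consistent with the absence of $\lambda$ from the upper bound.

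\emph{Lower bound and main obstacle.} For this direction I would adapt the corresponding estimate in Borisenko-Gallego-Revent\'os, refined with the $\CH$ Jacobi fields. The $\lambda$-convexity yields the pointwise lower bound $\lambda$ on the normal curvatures of $\partial\Omega(t)$, and combining this with a sphere comparison for the Jacobian of the outward equidistant flow (with coefficient $4k^2$ along $JN$ and $k^2$ perpendicular) together with a Minkowski-type integral identity in $\CH$ should yield $\liminf\geq\lambda/(4nk^2)$, the factor $4nk^2$ coming from the strong curvature direction $JN$. The main technical obstacle lies here: unlike $\Delta r$, a scalar readily computed from Corollary~\ref{esferes}, the shape operator $S$ of $\partial\Omega(t)$ does not generically have $JN$ as an eigenvector, so one must control the off-diagonal entries in the matrix Riccati equation governing $S$'s evolution to translate the direction-dependent comparison into a clean bound on the volume-to-area ratio.
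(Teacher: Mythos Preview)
Your sharpness computation is the same as the paper's. Your upper bound via the divergence theorem and the pointwise inequality $\Delta r=2k\coth(2kr)+(2n-2)k\coth(kr)>2nk$ is correct (modulo the standard excision of a small ball around $o$, whose boundary contribution vanishes). The paper takes a different route here: it writes both $\vol(\Omega(t))$ and $\vol(\partial\Omega(t))$ in geodesic polar coordinates about an interior point, obtaining
\[
\vol(\Omega)=\frac{1}{2nk^{2n}}\int_{S^{2n-1}}\sinh^{2n}(kl(u))\,dS,\qquad
\vol(\partial\Omega)=\frac{1}{k^{2n-1}}\int_{S^{2n-1}}\frac{\sinh^{2n-1}(kl(u))\cosh(kl(u))}{\cos\varphi}\,dS,
\]
and then uses $\cos\varphi\le1$ and $\sinh\le\cosh$. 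Your argument is slicker and makes the $\lambda$-independence of the upper bound transparent; the paper's has the advantage that the same two integral formulas are immediately reusable for the lower bound.

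The lower bound is where your proposal has a genuine gap. You have not proved it, and the obstacle you flag (that $JN$ need not be an eigenvector of the shape operator of $\partial\Omega(t)$, forcing a matrix Riccati analysis) is a red herring. The paper's argument is far more elementary and never touches the shape operator of $\partial\Omega(t)$ beyond the scalar angle $\varphi$. It simply invokes the angle estimate of Borisenko--Gallego--Revent\'os (Proposition~\ref{teoremaBGR}), applied in $\CH$ with $k_2=2k$: once $d(O,\partial\Omega(t))\ge(1/2k)\arctanh(\lambda/2k)$, one has $\cos\varphi\ge\lambda/(2k)$ at every boundary point. Inserting this into the polar formula for $\vol(\partial\Omega(t))$ and writing $\sinh^{2n}(kl)=\tanh(kl)\,\sinh^{2n-1}(kl)\cosh(kl)\ge\tanh(kr)\,\sinh^{2n-1}(kl)\cosh(kl)$ with $r=d(O,\partial\Omega(t))$, the integrals cancel and one gets
\[
\frac{\vol(\Omega(t))}{\vol(\partial\Omega(t))}\ \ge\ \frac{\lambda}{4nk^2}\,\tanh(kr)\ \longrightarrow\ \frac{\lambda}{4nk^2}.
\]
So the missing ingredient is not a refined direction-dependent comparison but the single scalar bound $\cos\varphi\ge\lambda/(2k)$, which already encodes all of the $\lambda$-convexity needed; the factor $4nk^2$ then comes from $2nk$ in the Jacobian and $2k$ in the angle bound, not from the holomorphic direction per se.
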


This paper is divided into five sections. In Section \ref{seccio2} we review the definition and some properties we need about complex hyperbolic space. We also define, more precisely, the notion of $\lambda$-convexity in Hadamard manifolds. In Section \ref{seccio3} we give a proof of Theorem \ref{theorem1}, we review some examples of $\lambda$-convex domains in complex hyperbolic space and we prove that tubes along a geodesic are convex domains in a Hadamard manifold. Section \ref{seccio4} is devoted to the proof of the main Theorem \ref{theorem2}. Finally, Section \ref{seccio5} contains some final remarks: we extend Theorem \ref{theorem1} and Theorem \ref{theorem2} in any non-compact rank one symmetric space and we study the quotient volume/area for a family of tubes described in Section \ref{seccio3}.

\section{Preliminaries}\label{seccio2}

\subsection{Complex hyperbolic space} 
\begin{definition}\emph{Complex hyperbolic space} is the only (up to holomorphic isometries) complete simply connected K\"{a}hler manifold of constant holomorphic curvature $-4k^2$. We denote it by $\CH$.
\end{definition}

Consider on $\C^{n+1}$ the Hermitian product $$\langle
z,w\rangle=-z_0\overline{w_0}+\sum_{j=1}^{n}z_j\overline{w_j}$$
and the subset $$M=\{z\in\C^{n+1}\,\,:\,\,\langle
z,z\rangle<0\}.$$ 
The projection of $M$ into $\mathbb{CP}^n$ corresponds to complex hyperbolic space $\CH$. We will consider $\CH$ equipped with the Bergman metric (see \cite{goldman} pp.68-74) rescaled in a way such that sectional curvature lies in $[-4k^2,-k^2]$. 
Geodesics in $\CH$ are projection of complex planes in $\C^{n+1}$ and the isometry group is $PU(1,n)$.

\smallskip
The \emph{holomorphic sectional curvature} $K_{hol}(v)$ for a unit tangent vector $v$ in a K\"ahler manifold is the sectional curvature of the plane generated by $\{v,Jv\}$. A K\"ahler manifold is said to be of constant holomorphic curvature if $K_{hol}(v)$ does not depend on $v$. A space with constant holomorphic curvature (a complex space form) is locally isometric to complex Euclidean space $\C^n$, complex projective space $\CP$, or complex hyperbolic space $\CH$. For these spaces sectional curvature of planes spanned by orthonormal vectors $u$, $v$ is given by 
$$g(R(u,v)v,u)=\frac{K_{hol}}{4}(1+3(g(u,Jv))^2)$$ (see \cite{kobayashi.nomizu} p.167).
If we consider complex hyperbolic space with constant holomorphic curvature $-4k^2$ then, from the equation above, its sectional curvature $K$ lies in $[-4k^2,-k^2]$.

\smallskip
In $\CH$ geodesic spheres are not umbilical hypersurfaces as  they are in real space forms. The knowledge of the explicit value of its principal curvatures shall be essential in the proof of the first theorem. 

\begin{proposition}[\cite{montiel}]\label{principalsEsfera}
In $\CH$ the principal curvatures of a geodesic sphere of radius $r$ are:
\begin{enumerate}[a)]\item $2k\coth(2kr)$ with multiplicity 1 and
principal direction $-JN$ (where $N$ is the inward unit normal
vector).
\item $k\coth(kr)$ with multiplicity $2n-2$.
\end{enumerate}
\end{proposition}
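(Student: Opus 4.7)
The plan is to compute the principal curvatures of the geodesic sphere $S(r)$ centered at a point $p \in \CH$ via Jacobi fields along a radial geodesic. Let $\gamma:[0,r]\to\CH$ be a unit speed geodesic with $\gamma(0)=p$ and $\gamma(r)=q$, so that $\dot\gamma(r)=-N$ is the outward unit normal at $q$. The principal curvatures of $S(r)$ with respect to the inward normal $N$ at $q$ are the eigenvalues of the shape operator $\mathcal{S}$, and for any orthonormal basis of Jacobi fields $Y_i$ along $\gamma$ with $Y_i(0)=0$ and $Y_i'(0)$ orthonormal, one has $\mathcal{S}(Y_i(r)) = Y_i'(r)$ after identifying tangent vectors at $q$.

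First I would exploit the Kähler condition. Since $J$ is parallel, the vector field $E_1(t):=J\dot\gamma(t)$ is parallel along $\gamma$, and a parallel orthonormal frame $E_2(t),\dots,E_{2n-1}(t)$ of the orthogonal complement of $\mathrm{span}(\dot\gamma,J\dot\gamma)$ can be constructed simultaneously. Applying the curvature identity for a space of constant holomorphic curvature $-4k^2$ recalled in the introduction,
\begin{equation*}
g(R(u,\dot\gamma)\dot\gamma,u)=-k^2\bigl(1+3\,g(u,J\dot\gamma)^2\bigr)
\end{equation*}
for any unit $u\perp\dot\gamma$, one reads off $R(E_1,\dot\gamma)\dot\gamma = -4k^2 E_1$ and $R(E_i,\dot\gamma)\dot\gamma = -k^2 E_i$ for $i\ge 2$. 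Hence the Jacobi equation $Y''+R(Y,\dot\gamma)\dot\gamma=0$ decouples in this frame into scalar ODEs.

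Next I would write $Y_i(t)=f_i(t)E_i(t)$ with initial data $f_i(0)=0,\ f_i'(0)=1$. For $i=1$ the equation is $f_1''-4k^2 f_1=0$, giving $f_1(t)=\tfrac{1}{2k}\sinh(2kt)$; for $i\ge 2$ the equation is $f_i''-k^2 f_i=0$, giving $f_i(t)=\tfrac{1}{k}\sinh(kt)$. The vectors $\{Y_i(r)\}$ form an orthogonal basis of $T_qS(r)$ in the $J$-adapted directions, and the shape operator acts diagonally:
\begin{equation*}
\mathcal{S}(E_i(r))=\frac{f_i'(r)}{f_i(r)}E_i(r).
\end{equation*}
Substituting yields the principal curvature $2k\coth(2kr)$ in the distinguished direction $E_1(r)=J\dot\gamma(r)=-JN$ and $k\coth(kr)$ on the $(2n-2)$-dimensional complement, which is the claim.

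The only delicate point is the sign and normalisation convention for $\mathcal{S}$: one must verify that the shape operator with respect to the \emph{inward} normal $N$ reads $Y_i'(r)$ (not its negative) on the Jacobi basis, which follows from the variation-of-geodesics definition of Jacobi fields together with the fact that $\dot\gamma(r)=-N$, so the Gauss formula gives $\nabla_{Y_i}(-N)=Y_i'$ at $q$. Once the sign is pinned down, all that remains is the two trivial ODE computations above, so no serious obstacle arises.
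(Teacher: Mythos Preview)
The paper does not supply its own proof of this proposition; it is quoted from Montiel \cite{montiel} and used as a black box. Your Jacobi--field computation is the standard derivation and is correct: the key points are that $J\dot\gamma$ is parallel (K\"ahler condition), that the Jacobi operator $u\mapsto R(u,\dot\gamma)\dot\gamma$ is self-adjoint so its quadratic form $-k^2(|u|^2+3\,g(u,J\dot\gamma)^2)$ determines it completely (hence it is diagonal in your frame, not merely that the diagonal entries are as stated), and that the shape operator for the inward normal $N=-\dot\gamma(r)$ sends $Y(r)$ to $Y'(r)$ via $[\dot\gamma,Y]=0$. All of that is handled in your write-up, so there is no gap.
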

Furthermore,
\begin{corollary}\label{esferes}Normal curvature of geodesic spheres
takes all possible values in the interval $[k\coth(kr), 2k\coth(2kr)]$.
\end{corollary}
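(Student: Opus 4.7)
The plan is to recognize Corollary \ref{esferes} as an immediate consequence of Proposition \ref{principalsEsfera} combined with the classical expression of normal curvature in terms of the shape operator. Let $N$ denote the inward unit normal and $S$ the shape operator of the geodesic sphere of radius $r$. By definition, the normal curvature in a unit tangent direction $v$ is $k_n(v)=\langle S(v),v\rangle$, so Corollary \ref{esferes} reduces to computing the range of this quadratic form on the unit sphere of the tangent space.

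By Proposition \ref{principalsEsfera}, $S$ is already diagonalized: the line spanned by $-JN$ is an eigenspace with eigenvalue $2k\coth(2kr)$, and its orthogonal complement (of real dimension $2n-2$) is an eigenspace with eigenvalue $k\coth(kr)$. Decomposing a unit tangent vector as $v=\alpha(-JN)+w$ with $w\perp JN$, $\alpha\in[-1,1]$ and $|w|^2=1-\alpha^2$, one obtains
\begin{equation*}
k_n(v)=2k\coth(2kr)\,\alpha^2+k\coth(kr)\,(1-\alpha^2).
\end{equation*}
This is a convex combination of the two principal curvatures, depending continuously on $\alpha^2\in[0,1]$, so as $v$ ranges over the unit tangent sphere, $k_n(v)$ sweeps out the entire closed interval between these two values.

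It only remains to check that the interval is written in the correct order, i.e.\ that $k\coth(kr)\leq 2k\coth(2kr)$. Using the duplication identity $2\coth(2x)=\coth(x)+\tanh(x)$ one has
\begin{equation*}
2k\coth(2kr)-k\coth(kr)=k\tanh(kr)>0,
\end{equation*}
which confirms that $k\coth(kr)$ is the minimum and $2k\coth(2kr)$ the maximum principal curvature, and hence all values in $[k\coth(kr),2k\coth(2kr)]$ are attained. There is no real obstacle here; the whole content of the corollary is the observation that the shape operator of a geodesic sphere in $\CH$ has only two distinct eigenvalues, so its associated quadratic form, being diagonal, takes exactly the interval spanned by them.
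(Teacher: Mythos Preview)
Your argument is correct and is precisely the natural justification: the paper states this corollary without proof, treating it as an immediate consequence of Proposition~\ref{principalsEsfera}, and your computation of $k_n(v)$ as a convex combination of the two principal curvatures is exactly the one-line reason behind it. The verification that $k\coth(kr)\leq 2k\coth(2kr)$ via the duplication identity is a nice touch that the paper leaves implicit.
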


From Proposition \ref{principalsEsfera} we prove the following result:
\begin{corollary}\label{totgeod}
Let $z$ be a point in a geodesic sphere in $\CH$, let $N$ be the inward
unit normal vector and let $v$ be a principal direction. Then the
submanifold generated by the exponential map on $\mathrm{span}\{N,v\}$ at $z$ is totally geodesic in $\CH$.
\end{corollary}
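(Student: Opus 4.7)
The plan is to split on the two types of principal directions identified in Proposition~\ref{principalsEsfera} and, in each case, recognize $\mathrm{span}\{N,v\}$ as one of the two kinds of $2$-planes in $T_z\CH$ whose exponential image is known to be totally geodesic in a complex space form: either a complex line (a $J$-invariant $2$-plane) or a totally real $2$-plane.

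To set up, I would decompose $T_z\CH=\R N\oplus \R JN\oplus W$, where $W$ is the $(2n-2)$-dimensional orthogonal complement of $\mathrm{span}\{N,JN\}$; a quick check using the Kähler identity $g(JX,JY)=g(X,Y)$ shows that both $\mathrm{span}\{N,JN\}$ and $W$ are $J$-invariant. Proposition~\ref{principalsEsfera} then says a principal direction at $z$ is either a scalar multiple of $JN$ or a unit vector lying in $W$.

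In the first case $\mathrm{span}\{N,v\}=\mathrm{span}\{N,JN\}$ is manifestly $J$-invariant, so it is a complex line at $z$ and its exponential image is a complex geodesic, i.e.\ a totally geodesic holomorphic copy of $\mathbb{C}H^{1}(-4k^{2})$. In the second case I would verify that $\mathrm{span}\{N,v\}$ is totally real, meaning $J(\mathrm{span}\{N,v\})\perp\mathrm{span}\{N,v\}$: the relations $JN\perp N$ and $Jv\perp v$ are automatic from $J$ being an orthogonal complex structure, $JN\perp v$ holds because $v\in W$ and $W\perp JN$, and $Jv\perp N$ holds because $Jv\in W$ (using that $W$ is $J$-invariant) while $N\perp W$.

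To finish, I would appeal to Cartan's theorem for symmetric spaces: a subspace $V\subset T_z\CH$ exponentiates to a totally geodesic submanifold provided $V$ is curvature-invariant. Both a complex $2$-plane and a totally real $2$-plane in a complex space form are curvature-invariant—in the totally real case this amounts to observing that on such a plane all the $J$-dependent terms of the Kähler curvature tensor (recalled in Section~\ref{seccio2}) contract to zero—yielding a totally geodesic $\mathbb{C}H^{1}(-4k^{2})$ or $\mathbb{R}H^{2}(-k^{2})$, respectively. The one conceptual step is invoking this classification of totally geodesic surfaces in $\CH$; the rest is the short linear-algebra verification above, and no hard calculation is needed.
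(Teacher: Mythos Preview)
Your proposal is correct and follows essentially the same approach as the paper: split on whether $v$ is (a multiple of) $-JN$ or orthogonal to $JN$, identify $\mathrm{span}\{N,v\}$ as a complex line or a totally real $2$-plane accordingly, and conclude total geodesy. The paper is terser---it simply asserts that these two plane types exponentiate to totally geodesic copies of $\mathbb{H}^2(-4k^2)$ and $\mathbb{H}^2(-k^2)$---whereas you spell out the linear-algebra verification and the curvature-invariance/Cartan mechanism, but the content is the same.
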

\begin{proof}
If $v=-JN$, then the submanifold generated by the exponential map on $\mathrm{span}\{N, JN\}$ is isometric to $\h^2(-4k^2)$ which is totally geodesic in $\CH$.

If $v\neq-JN$, then $\langle v,JN\rangle=0$ since $v$ and $JN$ are principal directions of different principal curvatures, but this is the condition that vectors $\{N,v\}$ have to satisfy to generate a totally real plane. So, the submanifold generated by the exponential map is isometric to $\h^2(-k^2)$ and also totally geodesic in $\CH$.
\end{proof}

\subsection{$\lambda-$convexity in Hadamard manifolds}
Let us recall the definition of $\lambda$-convexity in a Hadamard manifold as it is stated in \cite{borisenko.gallego.reventos}.
 
\begin{definition}\label{l-convexity}
A \emph{$\mathcal{C}^2$ hypersurface} of a Riemannian manifold is
said to be \emph{regular $\lambda$-convex} if at every point all
the normal curvatures (with respect to the inward unit normal
vector) are greater than or equal to $\lambda\geq 0$.
\end{definition}
This definition can be generalized to the non-regular case.
\begin{definition}\label{noC2}
A \emph{$\lambda$-convex hypersurface} is a hypersurface such that
for every point $p$ there is a regular $\lambda$-convex
hypersurface $S$ leaving a neighborhood of $p$ in the hypersurface
in the convex side of $S$. 
\end{definition}
\begin{definition}\label{convexDomain}
A domain is said to be \emph{(regular) $\lambda$-convex} if its
boundary is a (regular) $\lambda$-convex hypersurface.
\end{definition}

\begin{remark}Note that a $\lambda$-convex hypersurface is also $\lambda'$-convex for any $\lambda'\leq\lambda$.\end{remark}

\begin{remark}In spaces of constant sectional curvature and in complete
simply connected manifolds with non-positive sectional curvature
the notion of 0-convexity is equivalent to the convexity with
respect to geodesics (cf. \cite{alexander}).
\end{remark}

\section{$\lambda-$convexity in complex hyperbolic space}\label{seccio3}

\begin{definition}Let $\Omega\subset\CH$ be a compact convex domain and let $p\in\partial\Omega$ be a point of class $\mathcal{C}^1$. The largest  (smallest) sphere tangent to $\partial\Omega$ at $p$ contained in (containing) the domain is called \emph{inscribed (circumscribed) sphere at $p$}.\end{definition}

\begin{lemma}\label{normal}
Let $\Omega\subset\CH$ be a piecewise $\mathcal{C}^2$ convex domain. If at $p\in\partial\Omega$ there exists inscribed sphere $S_i$ and circumscribed sphere $S_c$, then
\begin{eqnarray}\label{normalCurvatures}K_{n,S_c}(v)\leq K_{n,\partial\Omega}(v)\leq
K_{n,S_i}(v)\end{eqnarray} for any $v$ principal direction of the inscribed sphere at $p$. ($K_{n,\,\cdot\,}(v)$ denotes the normal curvature  in the direction $v$ at $p$.)
\end{lemma}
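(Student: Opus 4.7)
The approach is to reduce the inequality to the classical two-dimensional comparison of geodesic curvatures for nested convex curves, by slicing the three hypersurfaces with a well-chosen totally geodesic surface through $p$. Since $S_i$, $\partial\Omega$ and $S_c$ are tangent at $p$ they share the inward unit normal $N$. Proposition \ref{principalsEsfera} shows that the principal directions of a geodesic sphere at a boundary point depend only on the normal direction there (namely $-JN$, with multiplicity one, and its orthogonal complement inside $N^{\perp}$), so $S_i$ and $S_c$ have the same principal directions at $p$; in particular any principal direction $v$ of $S_i$ is also principal for $S_c$. Corollary \ref{totgeod} then supplies a totally geodesic surface $\Pi:=\exp_p(\mathrm{span}\{N,v\})$ isometric to a real hyperbolic plane.

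We would then cut each of the three hypersurfaces by $\Pi$, obtaining smooth convex curves $\sigma_i$, $\sigma$ and $\sigma_c$ through $p$ with common tangent $v$ and common inward normal $N\in T_p\Pi$. The nesting of the ball bounded by $S_i$, the domain $\Omega$, and the ball bounded by $S_c$ in $\CH$ passes, after intersection with $\Pi$, to the analogous nesting of the convex regions bounded by these plane curves in a neighbourhood of $p$. Since $\Pi$ is totally geodesic, the ambient connection of $\CH$ and the induced connection of $\Pi$ agree on vector fields tangent to $\Pi$, so the normal curvature of each hypersurface at $p$ in the direction $v$ coincides with the geodesic curvature at $p$ of its intersection curve with $\Pi$.

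The lemma then follows from the standard two-dimensional fact that three smooth convex curves tangent at a common point with the same inward normal and locally nested in a Riemannian surface have geodesic curvatures at that point ordered inversely to the nesting; expressing them as graphs $y=f(x)$ in Fermi coordinates along the common tangent geodesic reduces the claim to $f_i''(0)\ge f''(0)\ge f_c''(0)$, which is immediate from $f_i\ge f\ge f_c$ together with $f_j(0)=f_j'(0)=0$. The only subtle point we anticipate is verifying that a single totally geodesic $\Pi$ can serve both inequalities simultaneously, which requires $v$ to be principal for both spheres; this is guaranteed by the observation on principal directions above, and the rest is routine bookkeeping.
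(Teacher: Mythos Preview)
Your proposal is correct and follows essentially the same strategy as the paper's own proof: both reduce to the two-dimensional comparison by slicing with the totally geodesic surface $\Pi=\exp_p(\mathrm{span}\{N,v\})$ furnished by Corollary~\ref{totgeod}, after observing that the inscribed and circumscribed spheres share their principal directions at $p$ because these depend only on $N$. Your write-up is more explicit than the paper's---you spell out why the normal curvature in direction $v$ agrees with the geodesic curvature of the slice (total geodesicity of $\Pi$) and you sketch the Fermi-coordinate argument for the planar inequality---but the underlying idea is identical.
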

\begin{remark}If at a point $p$ there exists only circumscribed or inscribed sphere, then the corresponding inequality remains true.
\end{remark}
\begin{proof}
Let $N$ be the inward unit normal vector to $\partial\Omega$ at $p$. Let
$\{e_1,...,e_{2n-1}=-JN\}$ be a basis of principal directions of the
inscribed sphere at $p$. Note that it is also a basis of principal
directions of the circumscribed sphere at $p$. Indeed, the direction $-JN$ is always a principal direction of a sphere and the directions perpendicular to $-JN$ are all principal directions (cf. Proposition \ref{principalsEsfera}). 

From Corollary \ref{totgeod}, the submanifold generated by the exponential map on $\text{span}\{N,e_i\}$ at $p$ is totally geodesic in $\CH$. When
$i=2n-1$ it is isometric to $\h^2(-4k^2)$. In the other cases it is isometric to $\h^2(-k^2)$.

Clearly, in real hyperbolic plane the inequality (\ref{normalCurvatures}) is satisfied. That is, suppose $p$ is a point in the boundary of a convex domain in a real hyperbolic plane. If there exists circumscribed and inscribed sphere at $p$, then geodesic curvature of the convex domain at $p$ lies between geodesic curvature of the spheres at $p$. Then, inequality (\ref{normalCurvatures}) is also valid for planes generated by the normal direction and a principal direction of the inscribed (circumscribed) sphere. 
\end{proof}

\begin{proposition}\label{bounds}
Let $\Omega\subset\CH$ be a piecewise $\mathcal{C}^2$ compact convex
domain and $N$ the inward normal vector of $\partial\Omega$. If at  $p\in\partial\Omega$ of class $\mathcal{C}^2$ there exists inscribed and circumscribed sphere, then the normal curvature
in directions $v$ such that $\langle v,JN\rangle=0$ satisfies
$$k\coth(kR)\leq K_{n,\partial\Omega}(v)\leq k\coth(kr)$$ and in the
direction $-JN$ satisfies
\[2k\coth(2kR)\leq K_{n,\partial\Omega}(-JN)\leq2k\coth(2kr)\] with $r$ the
radius of the inscribed sphere at $p$
and $R$ the radius of the circumscribed sphere at $p$.
\end{proposition}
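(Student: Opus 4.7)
The plan is to deduce the proposition by combining Lemma \ref{normal} with the explicit list of principal curvatures of geodesic spheres given in Proposition \ref{principalsEsfera}. First I would set up the local picture at $p$: both the inscribed sphere $S_i$ (of radius $r$) and the circumscribed sphere $S_c$ (of radius $R$) at $p$ are tangent to $\partial\Omega$ at $p$ from the same side as $\Omega$, so they share the tangent hyperplane $T_p\partial\Omega$ and the inward unit normal $N$ at $p$.

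Next, I would invoke Proposition \ref{principalsEsfera} to describe the principal directions at $p$ common to $S_i$ and $S_c$. For any geodesic sphere with inward unit normal $N$, the direction $-JN$ is a principal direction (with curvature $2k\coth(2k\rho)$ for radius $\rho$), and \emph{every} unit vector $v\in T_p\partial\Omega$ with $\langle v,JN\rangle=0$ is also a principal direction (with curvature $k\coth(k\rho)$). Since this eigenvalue/eigenspace decomposition depends only on $N$ and $J$, the principal directions of $S_i$ and $S_c$ at $p$ coincide, with values $k\coth(kr),\,2k\coth(2kr)$ for $S_i$ and $k\coth(kR),\,2k\coth(2kR)$ for $S_c$.

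Finally, I would apply Lemma \ref{normal}. For any $v\in T_p\partial\Omega$ with $\langle v,JN\rangle=0$, which is a principal direction of both $S_i$ and $S_c$, the lemma gives
\[
K_{n,S_c}(v)\;\leq\;K_{n,\partial\Omega}(v)\;\leq\;K_{n,S_i}(v),
\]
and substituting the explicit sphere values yields $k\coth(kR)\leq K_{n,\partial\Omega}(v)\leq k\coth(kr)$. Taking instead $v=-JN$, which is likewise a common principal direction, Lemma \ref{normal} gives $2k\coth(2kR)\leq K_{n,\partial\Omega}(-JN)\leq 2k\coth(2kr)$. There is no real obstacle here: the proposition is essentially a quantitative restatement of Lemma \ref{normal}, and the only point that needs care is to check that the principal direction decompositions of the two tangent spheres coincide — which is immediate from Proposition \ref{principalsEsfera} once one notes that both spheres share $N$ (and hence $JN$) at $p$.
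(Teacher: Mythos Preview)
Your proposal is correct and follows essentially the same approach as the paper: identify the directions $v\perp JN$ and $-JN$ as common principal directions of the inscribed and circumscribed spheres via Proposition~\ref{principalsEsfera}, then apply Lemma~\ref{normal} and substitute the explicit principal curvatures. You are in fact slightly more thorough than the paper, which spells out only the case $\langle v,JN\rangle=0$ and leaves the $-JN$ case implicit.
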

\begin{proof}Let us restrict to the first case. If $v\in T_{p}\partial\Omega$ such that $\langle v,Jn\rangle=0$, then 
$v$ is a principal direction of the inscribed and circumscribed sphere at $p$. The normal curvature of the inscribed (resp. circumscribed) sphere at $p$ in the direction $v$ is $k\coth(kr)$ (resp. $k\coth(kR)$) (see Proposition \ref{principalsEsfera}). By Lemma \ref{normal} we deduce the inequalities. 
\end{proof}

Now using this proposition we shall prove Theorem \ref{theorem1}. 

\begin{proof}[Proof of Theorem 1]
Let $\Omega(t)$ be any convex domain of the family and let $p\in\partial\Omega(t)$ be of class $\mathcal{C}^2$ with inscribed sphere $S_{i}(r)$. It follows from the definition of $\lambda$-convexity and Proposition \ref{bounds} that for some direction $v$ tangent to $\partial\Omega(t)$ at $p$ it is satisfied
\begin{equation}\label{d}
\lambda\leq K_{n,\,\partial\Omega(t)}(v)\leq K_{n,S_i}(v)=k\coth(kr).
\end{equation} 

Since the radius of the inscribed ball tends to infinity when the convex domain grows, for a convex domain big enough, $k\coth(kr)$ is close
to $k$. From inequalities in (\ref{d}) it is necessary that
$\lambda\leq k$.
\end{proof}

\begin{remark}
The proof of this theorem uses properties of $\CH$ that are not true in a general Hadamard manifold. That is, we use the fact that for each point $p$ in a geodesic sphere there are directions of the tangent space with normal curvature equal to $k\coth(kr)$. Given a Hadamard manifold with sectional curvature $K$ such that $-k_2^2\leq K\leq -k_1^2$ the normal curvature $K_n$ of a sphere of radius $r$ satisfies $k_1\coth(k_1r)\leq K_n\leq k_2\coth(k_2r)$ but the equality in (\ref{d}) may not be taken (cf. \cite{petersen}).
\end{remark}

\begin{example}
\emph{Geodesic spheres}. In any Hadamard manifold geodesic spheres are compact convex domains. In $\CH$ the normal curvature of
geodesic spheres of radius $r$ varies from $k\coth(kr)$ to $2k\coth(2kr)$ (cf. Proposition \ref{principalsEsfera}). Then, they
are $k\coth(kr)$-convex domains. 

\emph{Horospheres}. If we fix a point in a sphere and
let the radius tend to infinity we get a non-compact convex
domain called \emph{horosphere}. It is a $k$-convex domain.

\emph{Equidistant hypersurface}. Let $\mathbb{C}H^{p}(-4k^2)$, $1\leq p<n$, be an isometrically embedded complex hyperbolic space in $\CH$ and let $\RH$ be an  isometrically embedded real hyperbolic space in the complex hyperbolic space $\CH$. The \emph{equidistant hypersurface} from a $\mathbb{C}H^{p}(-4k^2)$,
$1\leq p<n$, or from a $\RH$ consists of all points at distance $r$ from the submanifold. Equidistant hypersurfaces are also convex hypersurfaces bounding a non-compact domain. Its $\lambda$-convexity is $k\tanh(kr)$. These facts follow from \cite{montiel}.

\emph{Bisectors}. It is known that in $\CH$ do not exist totally geodesic (real) hypersurfaces. The hypersurfaces which are considered as the substitute of totally geodesic hypersurfaces in $\CH$ are the ones generated by
the exponential map on a $(2n-1)$-dimensional tangent space at a point, that
is, \emph{bisectors}. However, they are not convex hypersurfaces
and do not bound convex domains (cf. \cite{goldman} p.193). So, we
cannot construct any convex domain with part of its boundary
contained in a bisector.
\end{example}

\subsection{Tube along a geodesic} Other examples of compact convex domains in $\CH$ can be constructed from the tube of radius $r$ about a geodesic.
\begin{definition}
Let $(M,g)$ be a Hadamard manifold and $\gamma$ a complete geodesic. The \emph{tube} $\tau(\gamma,r)$ of radius $r\geq 0$ about $\gamma$ is defined as
\begin{eqnarray*}
\tau(\gamma,r)&=&\{x\in M\,\,:\,\,\exists\textrm{ a geodesic segment }\xi\textrm{ of length }
L(\xi)\leq r \\ & &\quad \textrm{ from }x\textrm{ meeting }\gamma\textrm{
orthogonally}\}.
\end{eqnarray*}
\end{definition}
\begin{remark}As $\gamma$ is complete, the definition of $\tau(\gamma,r)$ is equivalent to (cf. \cite{gray} p.32) $$\bigcup_{y\in\gamma}\{\exp_y(v)\,\,:\,\,v\in(\gamma_y)^{\bot}\textrm{ and }||v||\leq r\}$$
and also to
$$\bigcup_{y\in\gamma}\{\exp_y(v)\,\,:\,\,v\in T_yM\textrm{ and }||v||\leq
r\}.$$ That is, a tube about a complete geodesic $\gamma$ can be defined as
the set of points at a distance from $\gamma$ less or equal than a fixed $r\geq 0$ (the radius of the tube) or as the union of the balls of radius $r$ with center in $\gamma$.
\end{remark}

In the next proposition we prove that, in any Hadamard manifold, the tube about a geodesic is a convex set and in the next corollary we give a way to modify this tube to obtain a convex compact domain.

\begin{proposition}Let $(M,g)$ be a Hadamard manifold and let $\gamma$ be a complete geodesic in $M$. The tube of radius $r$ about $\gamma$ is a convex domain.
\end{proposition}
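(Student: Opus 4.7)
The plan is to reduce the statement to the convexity of the distance function to $\gamma$. By the remark following Definition \ref{convexDomain}, in a Hadamard manifold $0$-convexity is equivalent to ordinary geodesic convexity, so it suffices to show that for any two points $p,q\in\tau(\gamma,r)$ the geodesic segment joining them also lies in $\tau(\gamma,r)$. Using the second description in the remark after the definition of the tube, we may rewrite
$$\tau(\gamma,r)=\{x\in M\,:\,d(x,\gamma)\leq r\},$$
so convexity of the tube will follow from convexity of the function $x\mapsto d(x,\gamma)$.

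First I would note that since $\gamma$ is a complete geodesic in a Hadamard manifold, for every $x\in M$ there is a unique foot of perpendicular $\pi(x)\in\gamma$ with $d(x,\pi(x))=d(x,\gamma)$; this is standard in non-positive curvature. Given $p,q\in\tau(\gamma,r)$, set $y_p=\pi(p)$ and $y_q=\pi(q)$, and let $\sigma:[0,1]\to M$ be the geodesic from $p$ to $q$ and $\beta:[0,1]\to\gamma$ the geodesic in $\gamma$ from $y_p$ to $y_q$, both affinely parametrized.

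The key ingredient is the well-known Hadamard-manifold fact that for any two affinely parametrized geodesics $\alpha,\beta\colon[0,1]\to M$ the function $t\mapsto d(\alpha(t),\beta(t))$ is convex. This is a direct consequence of the second variation formula combined with non-positive sectional curvature (equivalently, the CAT$(0)$ inequality). Applying this to $\sigma$ and $\beta$ we obtain, for every $t\in[0,1]$,
$$d(\sigma(t),\gamma)\leq d(\sigma(t),\beta(t))\leq(1-t)\,d(p,y_p)+t\,d(q,y_q)\leq(1-t)r+tr=r.$$
Hence $\sigma(t)\in\tau(\gamma,r)$ for all $t\in[0,1]$, and the tube is convex.

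The only non-trivial step is the convexity of the distance between two geodesics, which is really what drives the argument; once this is granted, the proof is three lines. I prefer this distance-function approach to computing the principal curvatures of the smooth part of $\partial\tau(\gamma,r)$ via Jacobi fields, both because it is shorter and because it sidesteps the non-smooth locus of $\partial\tau(\gamma,r)$ (where $\pi$ may fail to be smooth, e.g.\ for points on $\gamma$ itself when $r=0$, or along focal sets in more general Hadamard manifolds).
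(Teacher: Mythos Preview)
Your proof is correct and takes a genuinely different route from the paper's. The paper works on the boundary $\partial\tau(\gamma,r)$: it parametrizes a point $p=\exp_{\gamma(t_0)}(rv_0)$, produces a basis of $T_p\partial\tau(\gamma,r)$ by varying either the base point along $\gamma$ or the normal direction $v_0$, observes that the corresponding tangent vectors are Jacobi fields along the radial geodesic, and uses the Jacobi equation together with $K\leq 0$ to show that $-\tfrac{\partial}{\partial r}\langle T,T\rangle$ is nonnegative, i.e.\ the normal curvature in each of these directions is $\geq 0$. Your argument instead stays at the metric level: you identify the tube with the sublevel set $\{d(\cdot,\gamma)\leq r\}$ and invoke the CAT$(0)$ fact that $t\mapsto d(\sigma(t),\beta(t))$ is convex for any two geodesics, which immediately gives geodesic convexity of the sublevel set; you then quote the paper's own remark that $0$-convexity and geodesic convexity coincide in Hadamard manifolds. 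Your approach is shorter and avoids any smoothness discussion of the boundary, while the paper's Jacobi-field computation is closer in spirit to the surrounding $\lambda$-convexity framework and yields differential information (nonnegativity of the second fundamental form) rather than just set-theoretic convexity. One small quibble: your closing remark about ``focal sets in more general Hadamard manifolds'' is off---Hadamard manifolds have no focal points along geodesics, so $\partial\tau(\gamma,r)$ is in fact smooth for $r>0$; but this does not affect your argument.
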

\begin{proof}
Let $t_0\in\R$ and let $v_0\in(T_{\gamma(t_0)}\gamma)^{\bot}$ be such that $p=\phi(r,t_0,v_0)=\exp_{\gamma(t_0)}(rv_0)$. Let $v(t)$ denote the parallel transport of $v_0$ along $\gamma$ and let us consider the curve $\alpha(s)=\phi(r,s,v(s))=\exp_{\gamma(s)}(rv(s))$ on $\partial\tau(\gamma,r)$. Then $\alpha'(0)$ is a tangent vector of $\partial\tau(\gamma,r)$ at $p$. The inward unit normal vector at $p$ is given by $N=-d\phi(\partial/\partial r)=-\partial/\partial r(\exp_{\gamma(t_0)}(rv_0))$. In the following we prove that the normal curvature at  $p\in\partial\tau(\gamma,r)$ along the direction $T=\alpha'(0)=d\phi(\partial/\partial t)$ is non-negative. We have
\begin{align*}K_n(T)&=\langle-\nabla_{T/||T||}N,T/||T||\rangle\\&=\frac{1}{||T||^2}\langle[d\phi\partial/\partial t,d\phi\partial/\partial r]-\nabla_NT,T\rangle=\frac{-1}{2||T||^2}N\langle T,T\rangle\nonumber.\end{align*}
Note that $T$ is a Jacobi field with initial conditions $T(0)=v$ and $T'(0)=0$. Thus it can be expressed as
$$T=d\phi_{(r,t,v)}(\partial/\partial t)=\left.\frac{\partial\exp_{\gamma(t+s)}(rv(s))}{\partial s}\right|_{(r,0)}.$$

In a Riemannian manifold, Jacobi fields $Z$ along a geodesic $\beta$ satisfy the equation $$\nabla_{\beta'}\nabla_{\beta'}Z+R(Z,\beta')\beta'=0.$$ From this equation we prove that in a Hadamard manifold the function $-N\langle T(r),T(r)\rangle$ is increasing: \begin{eqnarray}\frac{-\partial}{\partial r}(N\langle T,T\rangle)&=&\frac{\partial^2}{\partial r^2}\langle T(r),T(r)\rangle\nonumber\\&=&2\langle\nabla_{\partial\phi/\partial r}\nabla_{\partial\phi/\partial r}T(r),T(r)\rangle+\langle\nabla_{\partial\phi/\partial_r}T(r),\nabla_{\partial\phi/\partial_r}T(r)\rangle\nonumber\\&=&-2\langle R(T(r),\partial\phi/\partial_r)\partial\phi/\partial_r,T(r)\rangle+||\nabla_{\partial\phi/\partial_r}T(r)||^2\geq 0.\nonumber\end{eqnarray}
Moreover, the function $-N\langle T(r),T(r)\rangle$ is zero when $r$ tends to 0. That is, \begin{eqnarray}-N\left.\langle T(r),T(r)\rangle\right|_{r=0}&=&2\left.\langle\frac{\partial}{\partial r}\right|_{r=0}(d\exp_{\gamma(t_0)}rv_0)\gamma'(t_0),\gamma'(t_0)\rangle\nonumber\\&=&2\langle\left.\frac{\partial}{\partial s}\right|_{s=0}\lim_{r\rightarrow0^+}(d\exp_{\gamma(t_0+s)}rv(s))v(s),\gamma'(t_0)\rangle\nonumber\\&=&2\langle\frac{\partial}{\partial s}v(s),\gamma'(t_0)\rangle=0\nonumber\end{eqnarray} because $v(s)$ is the parallel transport of $v(t_0)$ along $\gamma(s)$.

Now we describe $2n-2$ curves by $p$ with tangent vector at $p$ orthonormal to $v_{0}$.

Let $\{v_1,...,v_{2n-2}\}$ be orthonormal vectors of $T_{\gamma(t_0)}M$ orthogonal to $\gamma'(t_0)$ and $v_0$ and let us consider the curves $\beta_j(s)=\phi(r,t_0,v_0+sv_j)=\exp_{\gamma(t_0)}(r(v_0+sv_j))$ on $\partial\tau(\gamma,r)$. Then $\beta_j'(0)$ is a tangent vector of $\partial\tau(\gamma,r)$ at $p$ and $\{\alpha'(0),\beta_1'(0),...,\beta_{2n-2}'(0)\}$ is a basis of $T_p\partial\tau(\gamma,r)$.

As in the previous case we shall prove that the normal curvature at each direction $T_j=\beta_j'(0)=d\phi(\partial/\partial s)$ is non-negative. Fields $T_j$ are also Jacobi fields but with initial conditions $T_j(0)=0$ and $T_j'(0)=v_j$. We can write them as \[T_j=d\phi_{(r,t,v)}(\partial/\partial{v_j})
=\left.\frac{\partial\exp_{\gamma(t)}(r(v+sv_j))}{\partial/\partial{s}}\right|_{(r,0)}.\]
Therefore, $-N\langle T_j(r),T_j(r)\rangle$ is increasing for every $j\in\{1,...,2n-2\}$ and it is zero when $r$ tends to 0 because $\partial/\partial s(v_0+sv_j)=v_j$ and $\langle v_j,\gamma'(t_0)\rangle=0.$ 

Thus, in a Hadamard manifold tubes of any radius about a geodesic are convex domains. 
\end{proof}

\begin{remark}\label{altresTubes}In order to obtain compact convex domains we can, for example, intersect a tube with a ball centered in the geodesic with radius greater than the radius of the tube. 
\end{remark}

\begin{remark}In complex hyperbolic space the tube $\tau(\gamma,r)$ about a geodesic segment is not a convex domain since a part of its boundary is contained in a bisector.\end{remark}

\begin{definition}\label{modifiedTube}
Let $\gamma_L$ be a geodesic segment of length $L$. We define the \emph{modified tube $\tau_{L}(\gamma,r)$ of radius $r\geq 0$ about $\gamma_{L}$} as  the set of points at distance $r$ of $\gamma_{L}$.
\end{definition}

\begin{corollary}\label{tubModificat}
Let $(M,g)$ be a Hadamard manifold and let $\gamma_{L}$ be a geodesic segment of length $L$ in $M$. The modified tube $\tau_L(\gamma,r)$ is a convex domain.
\end{corollary}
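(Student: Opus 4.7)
The plan is to identify $\tau_L(\gamma,r)$ as the closed $r$-neighborhood of the segment $\gamma_L$, i.e.\ as the sublevel set $\{x\in M:d(x,\gamma_L)\le r\}$, and then derive its convexity from convexity of the distance-to-convex-set function on a Hadamard manifold.

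First I would verify that the segment $\gamma_L$ itself is a geodesically convex subset of $M$: given two points on $\gamma_L$, the sub-segment of $\gamma_L$ between them is a minimizing geodesic, and by the absence of conjugate points in non-positive curvature it is the unique minimizing geodesic in $M$ joining them. Next I would invoke the standard CAT$(0)$ fact that, for any closed convex subset $C$ of a Hadamard manifold, the distance $x\mapsto d(x,C)$ is a convex function on $M$. The short argument I have in mind is: given a geodesic $\sigma\colon[0,1]\to M$ with endpoints $x_i$, project to $y_i=\pi_C(x_i)\in C$, join the projections by a geodesic $\tau\subset C$, and estimate
\[
d(\sigma(t),C)\;\le\;d(\sigma(t),\tau(t))\;\le\;(1-t)\,d(x_0,C)+t\,d(x_1,C),
\]
where the second inequality is the convexity of $t\mapsto d(\sigma(t),\tau(t))$, a direct consequence of Rauch comparison in non-positive curvature. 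Applied to $C=\gamma_L$, this shows $\tau_L(\gamma,r)$ is geodesically convex, and by the remark following Definition \ref{convexDomain} that is precisely $0$-convexity in the sense of the paper.

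The main obstacle is really only to cite the convexity of $d(\cdot,\gamma_L)$, which is classical but relies on CAT$(0)$ machinery not made explicit earlier in the excerpt. An alternative more in the spirit of the Jacobi-field computation of the preceding proposition would be to extend $\gamma_L$ to a complete geodesic $\gamma$ and decompose $\tau_L(\gamma,r)$ into the central slab $\{x\in\tau(\gamma,r):\pi_\gamma(x)\in\gamma_L\}$ together with the two endpoint balls $B_r(p_0)\cup B_r(p_1)$; the slab would be convex via the preceding proposition intersected with a half-space cut at each endpoint, while the balls are convex as sublevel sets of $d(\cdot,p_i)$. However, gluing these three convex pieces into a single convex domain along their common boundary still reduces to essentially the same convex-distance principle, so the sublevel-set approach above is the cleanest route.
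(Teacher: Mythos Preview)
Your argument is correct and is a genuinely different route from the paper's. You reduce the claim to the CAT$(0)$ fact that the distance to a closed convex subset is a convex function, and then take a sublevel set; this bypasses the preceding proposition entirely and in fact works for the $r$-neighborhood of any closed convex set, not just a geodesic segment. The paper instead argues directly through Definition~\ref{noC2}: it exhibits, at every boundary point of $\tau_L(\gamma,r)$, a regular $0$-convex supporting hypersurface. Boundary points lying on the cylindrical part are supported by $\partial\tau(\gamma,r)$ (convex by the preceding proposition), while the remaining boundary points lie on the sphere of radius $r$ about one of the endpoints and are supported by that sphere. So the paper's proof is essentially the ``support by tube or by ball'' decomposition you sketch in your second paragraph, but carried out at the level of supporting hypersurfaces rather than by intersecting convex pieces. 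Your approach is cleaner and more general; the paper's has the virtue of being self-contained within the framework already set up and of giving the preceding proposition an immediate application.

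One small caveat on your alternative sketch: the phrase ``intersected with a half-space cut at each endpoint'' is problematic in a general Hadamard manifold, where totally geodesic hypersurfaces need not exist, so that route would require more care than you indicate. Your main argument avoids this issue entirely.
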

\begin{proof}
By Definition \ref{noC2} the modified tube is a convex domain if for every point in its boundary there exists a convex hypersurface supporting $\tau_{L}(\gamma,r)$.

The modified tube is the union of the balls of radius $r$ and center in the geodesic segment. The points of the intersection between a ball centered in one of the points of $\gamma_{L}$ and the tube $\tau(\gamma,r)$ are clearly supported by the tube about the complete geodesic $\gamma$. The other boundary points are also in the boundary of the ball centered in one of the endpoints of $\gamma_{L}$. Hence $\tau_L(\gamma,r)$ is a convex set.
\end{proof}

The modified tube will be useful to calculate explicitly its volume in the complex hyperbolic space (see Section \ref{seccio5}).

\section{Asymptotic behaviour}\label{seccio4}
Let $p$ be a fixed point in the interior of $\Omega$ and let $T'_{p}\CH$ be the unit tangent space at $p$. If $u\in T'_{p}\CH$ and $l(u)$ is the distance between $p$ and $\partial\Omega$ in the direction $u$, then the exponential map at $p$ on the set $$A=\{(u,t)\in T'_{p}\CH\times\mathbb{R}\,\,|\,\,0< t\leq l(u)\}$$ parametrizes $\Omega$ and
$$\textrm{vol}(\Omega)=\int_\Omega\tau=
\int_A\exp_p^{\ast}\tau$$ 
where $\tau$ is the volume element of $\CH$.

If $\mathcal J(t,u)$ is the Jacobian of the exponential map, then $\exp_p^\ast\tau=\mathcal{J}(t,u)t^{2n-1}dtdS_{2n-1}$. Using an orthonormal basis and the corresponding Jacobi fields along the geodesic given by
the direction $u$ we deduce 
$$\mathcal J(t,u)=\frac{\sinh^{2n-1}(kt)\cosh(kt)}{(kt)^{2n-1}}.$$
Thus, the volume of a compact convex domain $\Omega$ is given by
\begin{eqnarray}\label{volum}
\vol(\Omega)=\frac{1}{2nk^{2n}}\int_{S^{2n-1}}\sinh^{2n}(kl(u))dS_{2n-1},\end{eqnarray}
and the intrinsic volume of $\partial\Omega$ is given by
\begin{eqnarray}\label{area}
\vol(\partial\Omega)=\int_{S^{2n-1}}\frac{\sinh^{2n-1}(kl(u))\cosh(kl(u))}{k^{2n-1}\langle\partial_t,N\rangle}dS_{2n-1}\end{eqnarray}
where $N$ is the outward unit normal vector of $\partial\Omega$ and
$\partial_t$ the radial field from $p$. 

\medskip
In the proof of Theorem \ref{theorem2} we also use the following proposition about $\lambda$-convexity proved in \cite{borisenko.gallego.reventos}. The notion of $\lambda$-convexity gives some relations on how the boundary bends. Indeed, we have

\begin{proposition}[\cite{borisenko.gallego.reventos}]\label{teoremaBGR}
Let $M$ be a $(n+1)$-dimensional Hadamard manifold with sectional
curvature $K$ such that $-k_2^2\leq K\leq -k_1^2$. Let $\Omega$ be
a $\lambda$-convex domain with $\mathcal{C}^2$ boundary, $\lambda<
k_2$ and $O$ an interior point of $\Omega$. If $\varphi$ denotes
the angle of the normal to $\partial\Omega$ and the exterior
radial direction, when
$d(O,\partial\Omega)\leq(1/k_2)\arctanh(\lambda/k_2)$ we have
$$\cos\varphi\geq\frac{1}{k_2}\sqrt{\lambda^2\cosh^2k_2s-k_2^2\sinh^2k_2s}.$$
If $d(O,\partial\Omega)\geq(1/k_2)\arctanh(\lambda/k_2)$, then
$$\cos\varphi\geq\frac{\lambda}{k_{2}}.$$
\end{proposition}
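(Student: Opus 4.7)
The plan is to reduce the statement to a first-order differential inequality for the function $y := \cos\varphi = \langle N, \nabla r\rangle$ along curves in $\partial\Omega$, then integrate. Here $N$ is the outer unit normal to $\partial\Omega$ and $r = d(O,\cdot)$.

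First I would fix a boundary point and consider a unit-speed geodesic $\beta$ in $(\partial\Omega, g|_{\partial\Omega})$ whose initial direction is the tangential projection of $\nabla r$. Differentiating $y$ along $\beta$ splits into two pieces. The term $\langle\nabla_{\beta'}N,\nabla r\rangle = -II(\beta',\nabla r^{\top})$ is controlled via $\lambda$-convexity, contributing at most $-\lambda\sin\varphi$ to $dy/du$. The Hessian piece $\mathrm{Hess}(r)(\beta',N)$ is handled by writing both $\beta'$ and $N$ in the local frame $\{\nabla r, w\}$ with $w$ a unit vector perpendicular to $\nabla r$ lying in the plane spanned by $\beta'$ and $N$; using $\mathrm{Hess}(r)(\nabla r,\cdot) = 0$ leaves only $\mathrm{Hess}(r)(w,w)$, which by Hessian comparison under $K \geq -k_2^2$ is bounded above by $k_2\coth(k_2 r)$. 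Since $dr/du = \sin\varphi$ along $\beta$, the combination gives
\[
\frac{dy}{dr} \ge \lambda - k_2\coth(k_2 r)\, y.
\]

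Next I would integrate using the integrating factor $\sinh(k_2 r)$, which rewrites the inequality as $\tfrac{d}{dr}[\sinh(k_2 r)\, y - (\lambda/k_2)\cosh(k_2 r)] \ge 0$. The function $F(r) := \sinh(k_2 r)\, y(r) - (\lambda/k_2)\cosh(k_2 r)$ is thus non-decreasing along $\beta$. Starting $\beta$ at the boundary point closest to $O$, where $y = 1$ and $r = s := d(O,\partial\Omega)$, one has $F(s) = \sinh(k_2 s) - (\lambda/k_2)\cosh(k_2 s)$.

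Finally I would split into the two stated cases. When $s \ge (1/k_2)\arctanh(\lambda/k_2)$, the starting value $F(s)$ is non-negative, hence $F(r) \ge 0$ along $\beta$, which rearranges to $y(r) \ge (\lambda/k_2)\coth(k_2 r) \ge \lambda/k_2$. When $s < (1/k_2)\arctanh(\lambda/k_2)$ one works instead with the explicit general solution $y(r) = (\lambda/k_2)\coth(k_2 r) + C/\sinh(k_2 r)$ of the equality version of the ODE: determining $C$ from the initial data, locating the minimum of the extremal trajectory via $dy/dr = 0$, and expressing the result back in terms of $s$ gives the bound $\tfrac{1}{k_2}\sqrt{\lambda^2\cosh^2(k_2 s) - k_2^2\sinh^2(k_2 s)}$. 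The main obstacle I anticipate is deriving the ODE in full generality rather than in a 2D model: because $\beta \subset \partial\Omega$ is intrinsic and need not lie in a totally geodesic surface of $M$, the Hessian comparison must be used bilinearly on mixed radial/tangential vectors, and the decomposition of $\beta'$ and $N$ into $\nabla r$-parallel and $\nabla r$-orthogonal parts, together with $\mathrm{Hess}(r)(\nabla r,\cdot)=0$, is what pushes the comparison through. The bookkeeping of signs in that decomposition, and ensuring the direction of inequality for the combined expression, is the technical hinge of the argument.
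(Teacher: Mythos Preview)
The paper does not prove this proposition: it is quoted verbatim from \cite{borisenko.gallego.reventos} and used as a black box in the proof of Theorem~\ref{theorem2}. So there is no in-paper argument to compare your proposal against.

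That said, your outline is essentially the method of the original reference --- derive a first-order differential inequality for $y=\cos\varphi$ along the gradient lines of $r|_{\partial\Omega}$, integrate with the factor $\sinh(k_2 r)$, and read off the two cases --- but one point needs correcting. The curve $\beta$ must be an \emph{integral curve} of the tangential field $\nabla r^{\top}/|\nabla r^{\top}|$ on $\partial\Omega$, not a geodesic of $(\partial\Omega,g|_{\partial\Omega})$. Your computation of both the second fundamental form term and the Hessian term uses that $\beta'$ is proportional to $\nabla r^{\top}$ (so that $II(\beta',\nabla r^{\top})=\sin\varphi\,II(\beta',\beta')$ and so that $\beta',N,\nabla r$ are coplanar, which is what makes the decomposition into $\{\nabla r,w\}$ work); a geodesic of the induced metric will generally leave that direction after the initial instant, and then neither estimate goes through. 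Relatedly, one does not start the curve at the nearest point (where $\nabla r^{\top}=0$ and the field is singular) but at an arbitrary boundary point, flowing along $-\nabla r^{\top}$ down to a minimum of $r|_{\partial\Omega}$, where $y=1$; the inequality $dy/dr\ge \lambda-k_2\coth(k_2 r)\,y$ is then integrated from that minimum value of $r$ up to the point in question. A minor wording issue: the shape-operator contribution is \emph{at least} $+\lambda\sin\varphi$, not ``at most $-\lambda\sin\varphi$''; your final inequality has the correct sign.
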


\begin{proof}[Proof of Theorem 2]
Let $\varphi$ be the angle between the outward unit normal vector to $\partial\Omega(t)$ and the radial direction from a fixed point $O$ inside the convex set, then $0<\cos\varphi\leq 1$. Using (\ref{volum}), (\ref{area}) and the inequality $\sinh(x)\leq\cosh(x)$ for all $x\in\R$ we find
\begin{eqnarray}
&\frac{\displaystyle\vol(\Omega_t)}{\displaystyle\vol(\partial\Omega_t)}&=
\frac{\displaystyle\frac{1}{2nk^{2n}}\int_{S^{2n-1}}\sinh^{2n}(kl(u))dS_{2n-1}}
{\displaystyle\int_{S^{2n-1}}\frac{\sinh^{2n-1}(kl(u))\cosh(kl(u))}{k^{2n-1}\cos\varphi}dS_{2n-1}}\nonumber\\&&\leq
\frac{\displaystyle\int_{S^{2n-1}}\sinh^{2n}(kl(u))dS_{2n-1}}
{\displaystyle2nk\int_{S^{2n-1}}\sinh^{2n-1}(kl(u))\cosh(kl(u))dS_{2n-1}}\leq
\frac{1}{2nk}.\nonumber\end{eqnarray}

In order to obtain the lower bound we use that $\cos\varphi\geq{\lambda/2k}$ for a sufficiently large convex domain (see Proposition \ref{teoremaBGR}). Then,
\begin{eqnarray}
&\frac{\displaystyle\textrm{vol}(\Omega(t))}{\displaystyle\textrm{vol}(\partial\Omega(t))}&=
\frac{\displaystyle\frac{1}{2nk^{2n}}\int_{S^{2n-1}}\sinh^{2n}(kl(u))dS_{2n-1}}
{\displaystyle\int_{S^{2n-1}}\frac{\sinh^{2n-1}(kl(u))\cosh(kl(u))}{k^{2n-1}\cos\varphi}dS_{2n-1}}
\nonumber\\&&=
\frac{\displaystyle\int_{S^{2n-1}}\sinh^{2n-1}(kl(u))\cosh(kl(u))\tanh(kl(u))dS_{2n-1}}
{\displaystyle2nk\int_{S^{2n-1}}\frac{\sinh^{2n-1}(kl(u))\cosh(kl(u))}{\cos\varphi}dS_{2n-1}}
\nonumber\\&&\geq
\displaystyle\tanh(kr)\frac{\displaystyle\lambda\int_{S^{2n-1}}\sinh^{2n-1}(l(u))\cosh(l(u))dS_{2n-1}}
{\displaystyle4nk^2\int_{S^{2n-1}}\sinh^{2n-1}(l(u))\cosh(l(u))dS_{2n-1}}\nonumber\\&&=
\frac{\lambda}{4nk^2}\displaystyle\tanh(kr)\nonumber
\end{eqnarray} where $r$ is the distance between $O$
and the boundary of the convex domain. As $\tanh(kr)$
tends to 1 when $\Omega(t)$ expands to the whole space
we obtain 
$$\liminf_{t\rightarrow\infty}{\vol(\Omega(t))\over \vol(\partial\Omega(t))}\geq {1\over 4nk^2}.$$

\medskip
It remains to show the sharpness of the upper bound. Balls are $\lambda$-convex domains for every
$\lambda\in[0,k\coth(kr)]$ because normal curvatures are greater than or equal to $k\coth(kr)$ and families of balls are $\lambda$-convex for every $\lambda\in[0,k]$. As for a ball and for its boundary we have 
\begin{align*}\vol(B_{r})&=\frac{\sinh^{2n}(kr)\pi^n}{2nk^{2n}n!},\\\vol(\partial B_{r})&=\frac{\sinh^{2n-1}(kr)\cosh(kr)\pi^n}{k^{2n-1}n!},
\end{align*}
for a family of balls with the same
center we obtain
\begin{eqnarray*}
\limsup_{r\rightarrow\infty}\frac{\vol(B_r)}{\vol(\partial B_r)}&=&
\limsup_{r\rightarrow\infty}\frac{\sinh^{2n}(kr)\pi^n
n!}{2nk\sinh^{2n-1}(kr)\cosh(kr)\pi^n
n!}\\ &=&\limsup_{r\rightarrow\infty}\frac{\tanh(kr)}{2nk}=\frac{1}{2nk},
\end{eqnarray*}
which gives the upper bound on (\ref{fitesTeorema}).
\end{proof}

\begin{remark}The upper bound obtained here can be also deduced from \cite{thies}. Thies is interested in studying isoperimetric inequalities. He obtains an upper bound for the volume of a convex domain in a rank one symmetric space in terms of an integral over the boundary of the convex domain. The main tools he uses are the Riccati and Codazzi equations. Here we give a direct proof using an expression in polar coordinates for the volume of a convex domain and for the volume of its boundary. This expression is also used to obtain the lower bound. 
\end{remark}

\section{Some final remarks}\label{seccio5}
\emph{Generalization to non-compact rank one symmetric spaces.} An analogous result for Theorem \ref{theorem1} and Theorem \ref{theorem2} can be obtained in a non-compact rank one symmetric space using similar arguments to the ones in this paper (see \cite{berndt.quaternionic} and \cite{gray.vanhecke.corba} for an expression of the volume element in these spaces and the study of principal directions of a sphere). Non-compact rank one symmetric spaces are: real hyperbolic space, $\RH$, complex hyperbolic space, $\CH$, quaternionic hyperbolic space, $\QH$, and Cayley hyperbolic plane, $\OH$. They are Hadamard manifolds of real dimension $n$, $2n$, $4n$ and 16, respectively. Sectional curvature $K$ in $\CH$, $\QH$ and $\OH$ satisfies $-4k^2\leq K\leq -k^2$ and $\RH$ has constant sectional curvature equal to $-k^2$. We denote non-compact rank one symmetric spaces different from $\RH$ by $\KH$ with $\K=\{\C,\Q,\oc\}$ the associated real division algebra with real dimension $d$. We assume that $n=2$ if $\K=\oc$. 

Then Theorem \ref{theorem1} and Theorem \ref{theorem2} can be stated as Theorem A and Theorem B as follows. 

\begin{theoremA}In $\KH$ it can only exist families of compact $\lambda$-convex domains expanding over the whole $\KH$ if $\lambda\leq k$.
\end{theoremA}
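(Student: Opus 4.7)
The plan is to mimic the proof of Theorem \ref{theorem1}, replacing the use of Proposition \ref{principalsEsfera}, Corollary \ref{esferes} and Corollary \ref{totgeod} by their counterparts in $\KH$.

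First, I would recall from \cite{berndt.quaternionic} and \cite{gray.vanhecke.corba} the principal curvatures of a geodesic sphere of radius $r$ in $\KH$: the value $2k\coth(2kr)$ occurs with multiplicity $d-1$, with principal subspace $\mathrm{span}\{J_1 N,\dots,J_{d-1}N\}$ obtained by applying the imaginary structures of $\K$ to the inward normal $N$, while $k\coth(kr)$ occurs with multiplicity $(n-1)d$ on the orthogonal complement inside the tangent space of the sphere. In particular, normal curvatures of spheres fill the whole interval $[k\coth(kr),2k\coth(2kr)]$, and the value $k\coth(kr)$ is attained along a nonempty family of principal directions at every point. This is the analogue of Corollary \ref{esferes} that the argument needs.

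Second, I would establish the analogue of Corollary \ref{totgeod}. If $v$ is a principal direction of the sphere with normal curvature $k\coth(kr)$, then by construction $v$ is orthogonal to every $J_\alpha N$. In the complex and quaternionic cases this makes $\mathrm{span}\{N,v\}$ a totally real 2-plane, which exponentiates to a totally geodesic $\mathbb{R}H^{2}(-k^2)\subset\KH$. In the octonionic case $\OH$ the same conclusion holds, but requires a separate check against the classification of totally geodesic surfaces in the Cayley hyperbolic plane; this verification is the main technical obstacle.

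With these two ingredients in place, the proofs of Lemma \ref{normal} and Proposition \ref{bounds} transfer to $\KH$ essentially verbatim: at any $\mathcal{C}^2$ boundary point $p\in\partial\Omega$ admitting an inscribed sphere $S_i$ of radius $r$, for a principal direction $v$ of $S_i$ with curvature $k\coth(kr)$ one obtains
\[
K_{n,\partial\Omega}(v)\leq K_{n,S_i}(v)=k\coth(kr),
\]
by the one-variable comparison of geodesic curvatures performed inside the totally geodesic $\mathbb{R}H^{2}(-k^2)$ spanned by $N$ and $v$. Applied to a family $\{\Omega(t)\}$ of $\lambda$-convex domains expanding over $\KH$, the inscribed radius at a regular boundary point must tend to infinity, so $k\coth(kr)\to k$, and the $\lambda$-convexity assumption forces $\lambda\leq k$. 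Everything except the octonionic identification step is a direct translation of the argument in Section \ref{seccio3}.
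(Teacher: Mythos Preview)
Your proposal is correct and follows exactly the route the paper indicates: the paper gives no separate proof of Theorem A but simply states that the argument of Theorem \ref{theorem1} carries over once the principal curvatures of geodesic spheres in $\KH$ are taken from \cite{berndt.quaternionic} and \cite{gray.vanhecke.corba}, and you have identified precisely those ingredients and how they feed into the analogues of Lemma \ref{normal} and Proposition \ref{bounds}. The octonionic step you flag is less of an obstacle than you suggest: rather than appealing to a classification of totally geodesic surfaces in $\OH$, it suffices to note that in any symmetric space a 2-plane $\sigma$ with $R(\sigma,\sigma)\sigma\subset\sigma$ exponentiates to a totally geodesic surface, and for planes of sectional curvature $-k^2$ in $\OH$ this Lie triple condition is a direct check from the curvature tensor (keeping in mind that the notation $J_1N,\dots,J_{7}N$ should be read only pointwise, since $\OH$ carries no global parallel family of complex structures).
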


\begin{theoremB}Let $\{\Omega(t)\}_{t\in\R^+}$ be a piecewise $\mathcal{C}^2$ family of
$\lambda$-convex compact domains, $0\leq \lambda\leq k$, expanding over the
whole space $\KH$, $n\geq 2$. Then
$$\frac{\lambda}{2dnk^2}\leq
\liminf_{t\rightarrow\infty}\frac{\emph{vol}(\Omega(t))}{\emph{vol}(\partial\Omega(t))}\leq
\limsup_{t\rightarrow\infty}\frac{\emph{vol}(\Omega(t))}{\emph{vol}(\partial\Omega(t))}\leq
\frac{1}{(dn+d-2)k}.$$ Moreover, the upper bound is sharp.
\end{theoremB}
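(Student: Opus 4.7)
The plan is to adapt the proof of Theorem \ref{theorem2} to $\KH$ using the analogous geometric input. The principal curvatures of a geodesic sphere of radius $t$ in $\KH$ are $2k\coth(2kt)$ with multiplicity $d-1$ (along the $\K$-imaginary directions) and $k\coth(kt)$ with multiplicity $dn-d$, summing to the required $dn-1$. Inserting the associated Jacobi fields into the Jacobian of $\exp_p$ yields
$$\mathcal{J}(t,u)\,t^{dn-1}=\frac{\sinh^{dn-1}(kt)\cosh^{d-1}(kt)}{k^{dn-1}},$$
so that the analogues of (\ref{volum})--(\ref{area}) read
$$\vol(\Omega)=\int_{S^{dn-1}}\!\int_0^{l(u)}\frac{\sinh^{dn-1}(kt)\cosh^{d-1}(kt)}{k^{dn-1}}\,dt\,dS_{dn-1},$$
$$\vol(\partial\Omega)=\int_{S^{dn-1}}\frac{\sinh^{dn-1}(kl(u))\cosh^{d-1}(kl(u))}{k^{dn-1}\cos\varphi}\,dS_{dn-1}.$$

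The main calculation for the upper bound is the one-variable inequality
$$(dn+d-2)k\int_0^l\sinh^{dn-1}(kt)\cosh^{d-1}(kt)\,dt\;\leq\;\sinh^{dn-1}(kl)\cosh^{d-1}(kl),$$
which I would prove via the pointwise comparison $G'(t)\geq(dn+d-2)kG(t)$ for $G(t)=\sinh^{dn-1}(kt)\cosh^{d-1}(kt)$. A direct computation of $G'$ reduces the estimate to $\cosh^2(kt)-\sinh(kt)\cosh(kt)\geq (d-1)/(dn+d-2)$; since the left-hand side equals $(1+e^{-2kt})/2$ and decreases monotonically from $1$ at $t=0$ to $1/2$ at infinity, the bound holds for every $t\geq 0$ precisely when $2(d-1)\leq dn+d-2$, i.e.\ for every $n\geq 1$. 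Combined with $\cos\varphi\leq 1$ in the area formula, this gives the limsup estimate $\vol(\Omega)/\vol(\partial\Omega)\leq 1/((dn+d-2)k)$.

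For the lower bound, Proposition \ref{teoremaBGR} with $k_2=2k$ (valid since $-4k^2\leq K\leq -k^2$ in $\KH$ for $\K\neq\R$) yields $\cos\varphi\geq\lambda/(2k)$ once the inradius is large. Pairing this with a matching lower estimate for the radial integral---obtained from $\cosh^{d-1}\geq\cosh$ and the exact identity $\int_0^l\sinh^{dn-1}(kt)\cosh(kt)\,dt=\sinh^{dn}(kl)/(dnk)$---and using that $\tanh(kr)\to 1$ as $\Omega(t)$ expands, one obtains the claimed liminf by the same algebraic manipulation as in the proof of Theorem \ref{theorem2}. Sharpness of the upper bound is then verified on a family of concentric geodesic balls $B_r$: by the analogue of Proposition \ref{principalsEsfera} these are $\lambda$-convex for every $\lambda\leq k\coth(kr)\leq k$, and their explicit volumes give $\vol(B_r)/\vol(\partial B_r)=\int_0^r F(t)\,dt/F(r)\to 1/((dn+d-2)k)$ by the same integral inequality used in the upper-bound step.

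The main obstacle is precisely the one-variable inequality above; it hinges on the closed form $(1+e^{-2kt})/2$ for $\cosh^2(kt)-\sinh(kt)\cosh(kt)$ and on verifying that the threshold $(d-1)/(dn+d-2)$ lies below $1/2$ in the relevant regime. Once this pointwise estimate is in hand, every remaining step is a faithful transcription of the $\CH$ argument with the exponents $(2n-1,1)$ systematically replaced by $(dn-1,d-1)$.
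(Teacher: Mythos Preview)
Your upper-bound argument is correct and is in fact a genuine adaptation rather than a mere transcription: the paper's proof of Theorem~\ref{theorem2} relies on the closed form $\int_0^l\sinh^{2n-1}(kt)\cosh(kt)\,dt=\sinh^{2n}(kl)/(2nk)$, which has no analogue for $d>2$, and your differential inequality $G'\geq(dn+d-2)kG$ is exactly the right replacement. The reduction to $\cosh^2(kt)-\sinh(kt)\cosh(kt)\geq(d-1)/(dn+d-2)$ is correct (use $\sinh^2=\cosh^2-1$ after dividing by $dn+d-2$), and the sharpness via balls follows by L'H\^opital from $G'/G\to(dn+d-2)k$.

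The lower-bound argument, however, breaks for $d>2$. After using $\cosh^{d-1}\geq\cosh$ and the exact identity for $\int_0^l\sinh^{dn-1}\cosh$, you arrive at
\[
\frac{\vol(\Omega)}{\vol(\partial\Omega)}\;\geq\;\frac{\lambda}{2dnk^{2}}\cdot\frac{\displaystyle\int_{S^{dn-1}}\sinh^{dn}(kl(u))\,dS}{\displaystyle\int_{S^{dn-1}}\sinh^{dn-1}(kl(u))\cosh^{d-1}(kl(u))\,dS},
\]
and the ``same algebraic manipulation'' would be to write the pointwise ratio as $\tanh(kl)$ and bound it below by $\tanh(kr)$. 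But here the pointwise ratio is
\[
\frac{\sinh^{dn}(kl)}{\sinh^{dn-1}(kl)\cosh^{d-1}(kl)}=\frac{\tanh(kl)}{\cosh^{d-2}(kl)},
\]
which for $d>2$ tends to $0$, not $1$, as $l\to\infty$; hence the ratio of integrals is not bounded away from $0$ and the argument yields nothing. The crude estimate $\cosh^{d-1}\geq\cosh$ discards exactly the growth needed to match the $\cosh^{d-1}$ in the area integrand. You must keep the full density $G(t)=\sinh^{dn-1}(kt)\cosh^{d-1}(kt)$ in the volume integral and bound $\int_0^{l}G\,/\,G(l)$ from below directly---for instance via an upper bound on $G'/G$ for $t\geq r$ using $\coth(kt)\leq\coth(kr)$. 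Note that since $\int_0^{l}G/G(l)\to 1/((dn+d-2)k)$, this route naturally produces $2(dn+d-2)$ rather than $2dn$ in the denominator; the two agree only when $d=2$.
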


\emph{Quotient for the modified tubes.} Gray in \cite{gray.vanhecke.corba} obtains the following formula for the volume of a tube about a curve $\sigma$ of length $L$ in complex hyperbolic space
$$\vol(\tau_L(\sigma,r))=\frac{L \vol(S^{2n-2})}{(4k^2)^{n-1}}\int_0^r\sinh^{2n-2}(ks)\left(1+\frac{2n}{2n-1}\sinh^2(ks)\right)ds.$$
If we fix a geodesic line $\gamma\subset\CH$ and we consider the family $\{\tau_{L}(\gamma,r)\}_{\{r,L\}}$ of modified tubes we obtain a family of compact convex domains expanding over the whole $\CH$. To determine the value of \begin{align*}&\limsup_{r\rightarrow\infty, L\rightarrow\infty}\frac{\textrm{vol}(\tau_L(\gamma,r))}{\textrm{vol}(\partial \tau_L(\gamma,r))}=\limsup_{r\rightarrow\infty, L\rightarrow\infty}\frac{\textrm{vol}(\tau(\gamma,r))+\vol(B_{r})}{\textrm{vol}(\partial \tau(\gamma,r))+\vol(\partial B_{r})}\\&=\limsup_{r\rightarrow\infty, L\rightarrow\infty}\frac{\textrm{vol}(\tau(\gamma,r))}{\textrm{vol}(\partial \tau(\gamma,r))+\vol(\partial B_{r})}+\limsup_{r\rightarrow\infty, L\rightarrow\infty}\frac{\vol(B_{r})}{\textrm{vol}(\partial \tau(\gamma,r))+\vol(\partial B_{r})},\end{align*} we calculate the next four limits. In the first one we use L'H\^{o}pital's rule
\begin{align*}\limsup_{r\rightarrow\infty,
L\rightarrow\infty}\frac{\vol(\partial\tau(\gamma,r))}{\vol(\tau(\gamma,r))}=2nk, \quad
&\limsup_{r\rightarrow\infty,L\rightarrow\infty}
\frac{\vol(\partial B_{r})}{\vol(\tau(\gamma,r))}=0,
\\\limsup_{r\rightarrow\infty,L\rightarrow\infty}
\frac{\vol(\partial\tau(\gamma,r))}{\vol(B_{r})}=\infty,\quad&
\limsup_{r\rightarrow\infty,L\rightarrow\infty}
\frac{\vol(\partial B_{r})}{\vol(B_{r})}=2nk.
\end{align*}
So that, 
\begin{align*}\limsup_{r\rightarrow\infty, L\rightarrow\infty}\frac{\textrm{vol}(\tau_L(\gamma,r))}{\textrm{vol}(\partial \tau_L(\gamma,r))}
=\frac{1}{2nk}\end{align*}
which is also equal to the value of the upper bound in (\ref{fitesTeorema}). 
The value of this limit does not depend on the relation between the growth of the length of the segment and the radius of the tube. 

By an analogous but longer calculation it can be also proved that the limit of the quotient volume/area for the family of compact convex domains constructed as in Remark \ref{altresTubes} is also $1/2nk$. Thus, it does not depend on the relation between the radius of the tube and the radius of the sphere. 

\emph{Lower bound.} To complete the study of this asymptotic behaviour in complex hyperbolic space it remains to decide if the lower bound in Theorem \ref{theorem2} is sharp. In the study of the analogous problem in real hyperbolic space it is given another family of convex domains expanding over the whole space which tends to the lower bound (see \cite{solanes.tesi}). Let us describe this family briefly. Consider a ball of radius $r>0$ centered at a point $p\in\RH$. Let $p_{1}$ and $p_{2}$ be the endpoints of a segment of length $2R$ ($R>r$) with $p$ its midpoint. Take the convex hull $Q(r,R)$ of the ball and the points $p_{1}$ and $p_{2}$. The tube of radius $\epsilon$ about $Q(r,R)$ is a $\tanh(\epsilon)$-convex hypersurface. When $r$ and $R$ tend to infinity we obtain a family of compact  $\tanh(\epsilon)$-convex domains. The quotient between its volume and its area tends to the lower bound in $\RH$.

If we try to construct a family of domains in a similar way we get into trouble due to the different trigonometric formulas of the trigonometry in the complex hyperbolic space.

\end{document}